\def\dashint{\,\ThisStyle{\ensurestackMath{%
  \stackinset{c}{.2\LMpt}{c}{.5\LMpt}{\SavedStyle-}{\SavedStyle\phantom{\int}}}%
  \setbox0=\hbox{$\SavedStyle\int\,$}\kern-\wd0}\int}
\def\ddashint{\,\ThisStyle{\ensurestackMath{%
  \stackinset{c}{.2\LMpt}{c}{.5\LMpt+.2\LMex}{\SavedStyle-}{%
    \stackinset{c}{.2\LMpt}{c}{.5\LMpt-.2\LMex}{\SavedStyle-}{%
      \SavedStyle\phantom{\int}}}}\setbox0=\hbox{$\SavedStyle\int\,$}\kern-\wd0}\int}
\theoremstyle{plain}
\begin{document}


\theoremstyle{plain}
\newtheorem{theorem}{Theorem} [section]
\newtheorem{corollary}[theorem]{Corollary}
\newtheorem{lemma}[theorem]{Lemma}
\newtheorem{proposition}[theorem]{Proposition}
\newtheorem{example}[theorem]{Example}


\theoremstyle{definition}
\newtheorem{definition}[theorem]{Definition}
\theoremstyle{remark}
\newtheorem{remark}[theorem]{Remark}

\numberwithin{theorem}{section}
\numberwithin{equation}{section}

\title{A Nonlinear Mean Value Property for Monge-Amp\`ere}

\thanks{P.B. partially supported by the Academy of Finland project no. 298641. \\
\indent F.C. partially supported by MINECO grants MTM2016-80474-P, MTM2017-84214-C2-1-P, and MTM2017-85934-C3-2-P (Spain),
ICMAT Severo Ochoa project SEV-2015-0554, and 
ERC grants 307179-GFTIPFD and 834728-QUAMAP.
 \\
\indent J.D.R. partially supported by 
CONICET grant PIP GI No 11220150100036CO
(Argentina) and UBACyT grant 20020160100155BA (Argentina).
}

\author[P. Blanc]{Pablo Blanc}
\address{Department of Mathematics and Statistics, University of Jyv\"askyl\"a, PO Box 35, FI-40014 Jyv\"askyl\"a, Finland}
\email{pblanc@dm.uba.ar}

\author[F. Charro]{Fernando Charro}
\address{Department of Mathematics, Wayne State University, 656 W. Kirby, Detroit, MI 48202, USA}
\email{fcharro@wayne.edu}
\thanks{}

\author[J. J. Manfredi]{Juan J. Manfredi}
\address{
Department of Mathematics,
University of Pittsburgh, Pittsburgh, PA 15260, USA.
}
\email{manfredi@pitt.edu}

\author[J. D. Rossi]{Julio D. Rossi}
\address{Departamento  de Matem\'atica, FCEyN, Universidad de Buenos Aires,
 Pabell\'on I, Ciudad Universitaria (1428),
Buenos Aires, Argentina.}
\email{jrossi@dm.uba.ar}

\keywords{Monge-Amp\`ere, Mean Value Formulas, viscosity solutions.
\\
\indent 2020 {\it Mathematics Subject Classification:}
35J60, 
35J96, 
35B05
}
\date{}

\begin{abstract}

In recent years there has been an increasing interest in whether a mean value property, known to characterize harmonic functions, 
can be extended in some
weak form to solutions  of nonlinear equations. This question has been partially motivated by the surprising connection between Random Tug-of-War games and the normalized $p-$Laplacian discovered some years ago, where a nonlinear asymptotic 
mean value property for solutions of  a PDE is related to a dynamic programming principle for an appropriate  game. 
 Currently, asymptotic nonlinear mean value formulas are rare in the literature and our goal is to show that an asymptotic 
 nonlinear mean value formula holds for the classical Monge-Amp\`ere equation.
\end{abstract}

\maketitle

\begin{center}
\textit{Dedicated with admiration and affection to our friend Umberto Mosco on his $80^{th}$-birthday.} 
\end{center}

\section{Introduction}\label{section.intro}

It is well-known that there is a mean value formula that characterizes harmonic functions;  a function $u$ is harmonic 
($u$ is a solution to $\Delta u=0$) in a domain
$\Omega\subset\mathbb{R}^n$ 
 if and only if $u$ satisfies the mean value property
\begin{equation}\label{MVPrperty_HarmonicFunctions}
u(x)
=
\dashint_{B_\varepsilon(x)}u(y)\, dy
\end{equation}
for each $x\in\Omega$ and all $0 < \varepsilon< \textrm{dist}(x,\partial\Omega)$. 
In fact, a weaker statement  
known as asymptotic mean value property is enough to characterize harmonicity: A continuous function $u$ is harmonic in $\Omega$ if and only if 
\begin{equation}\label{Asymptotic.MVP.Harmonic}
u(x)=\dashint_{B_\varepsilon(x)} u(y)\,dy+o(\varepsilon^2)\quad\textrm{as}\ \varepsilon\to0,
\end{equation}
see \cite{Blaschke,Privaloff}.
The same results hold if we replace the average over the ball $B_\varepsilon(0)$ by an average over the sphere $\partial B_\varepsilon(0)$. 
Moreover, the mean value property can be extended to characterize sub- and superharmonic functions replacing the equality by the appropriate inequality
in \eqref{MVPrperty_HarmonicFunctions} and \eqref{Asymptotic.MVP.Harmonic}.
Furthermore, a discrete version also holds; a continuous function $u$ is harmonic  if and only if  
\begin{equation}\label{Asymptotic.MVP.Harmonic.discrete}
u(x)= \frac1n \sum_{j=1}^n \left\{ \frac12 u(x+\varepsilon e_j) + \frac12 u(x-\varepsilon e_j) \right\}
+o(\varepsilon^2)\quad\textrm{as}\ \varepsilon\to 0,
\end{equation}
where $\{e_1, \dots, e_n\}$ is the canonical basis of $\mathbb{R}^n$.

Extensions of these ideas to the classical Poisson equation $-\Delta u=f$  derive  from the formula
\begin{equation}\label{Laplaciano.formula.asintotica}
\Delta u(x)=\lim_{\varepsilon \to0}\left[\frac{2(n+2)}{\varepsilon^2}\left(\dashint_{B_\varepsilon(x)} u(y)\,dy-u(x)\right)\right],
\end{equation}
from where we conclude that when $f$ is continuous,   a smooth function  $u$  satisfies $-\Delta u=f$ in $\Omega$ if and only if
\begin{equation}
\label{Asymptotic.MVP.Harmonic.f}
u(x)=\dashint_{B_\varepsilon (x)} u(y)\,dy+\frac{\varepsilon^2}{2(n+2)}\,f(x)+o(\varepsilon^2)\qquad\textrm{as} \ \varepsilon \to0
\end{equation}
for each $x \in \Omega$.

Mean value formulas for operators other than the Laplacian are relatively rare in the literature. In \cite{Littman.et.al1963}, the authors prove a mean value theorem for linear divergence form operators with bounded measurable coefficients.
In this case, the statement of the mean value formula involves the Green function for the operator. In  \cite{Caf98} (see also \cite{BH2015,Caffarelli.Roquejoffre.2007}), a simpler statement in terms of mean value sets
 was pointed out (a set $D$ is a mean value set for the point $x\in D$ and the operator $L$ if the mean value property \eqref{MVPrperty_HarmonicFunctions} holds with $D$ replacing $B_\varepsilon(x)$ for every $u$ such that $Lu = 0$).
 It is worth mentioning that  mean value sets and their properties are yet to be fully understood, see \cite{Armstrong.2018,Armstrong.Blank.2019,Aryal.Blank.2019,Benson.Blank.LeCrone.2019,Ku}. For mean value properties in the sub-Riemannian 
 setting we refer to \cite{BoLan}.

In recent years there has been an increasing interest 
in whether the mean value property can be extended to  nonlinear operators
such as the normalized infinity Laplacian
\[
\Delta_\infty^Nu=\frac{1}{|\nabla u|^2}\sum_{i,j=1}^n u_{x_ix_j}u_{x_i}u_{x_j}
\]
and the normalized (also called $1-$homogeneous) $p$-Laplacian, 
\[
\Delta_p^N u=
|\nabla u|^{2-p}\,\text{div}\big(|\nabla u|^{p-2}\nabla u\big)
=
\Delta u+ (p-2)\,\Delta_{\infty}^{N}u,
\]
for $1<p<\infty$.
 This question was originally motivated
   by the surprising connection 
   between 
   dynamic programming principles and the normalized infinity Laplacian discovered in \cite{LeGruyerArcher}, \cite{LeGruyer} and its connection with Random Tug-of-Wag games in  \cite{PSSW}. A 
nonlinear mean value property for $p$-harmonic functions first appeared in  \cite{[Manfredi et al. 2010]}  motivated by the Random Tug-of-War games with noise in \cite{PSSW2}. 
It was proved in \cite{[Manfredi et al. 2010]}  that $p$-harmonic functions
are characterized by the fact that they satisfy the following nonlinear asymptotic mean value formula
in the viscosity sense
 \begin{equation}\label{MVFormula}
u(x)
=
\left(\frac{p-2}{p+n}\right)
\left(\frac{\max_{\overline{B}_\varepsilon (x)}u+\min_{\overline{B}_\varepsilon  (x)}u}{2}\right)
+
\left(\frac{2+n}{p+n}\right)\dashint_{B_\varepsilon  (x)}u(y)\, dy+o(\varepsilon^2)\quad\textnormal{as}\ \ \varepsilon \to
0.
\end{equation}
See also also \cite{Angel.Arroyo.Tesis} and the recently published book \cite{[Blanc and Rossi 2019]} for historical references and  more general equations. 
\par
Observe that  the mean value formula  holds in a generalized or viscosity sense,
meaning that whenever a smooth test function touches $u$ from above (respectively below) at  a point $x$, the mean value formula \eqref{MVFormula} 
is satisfied with $\leq$ (resp. $\geq$) for the test function at  $x$.
In addition, in certain cases  \eqref{MVFormula}  holds point-wise. This is the case when the dimension $n=2$ and $1<p<\infty$ (\cite{LindqvistManfredi, ArroyoLLorente}). However, it does not hold for $p=\infty$ for the
Aronsson function, the $\infty-$harmonic function $u(x,y) = x^{4/3} - y^{4/3}$, see \cite{[Manfredi et al. 2010]}. 
Whether we have a point-wise characterization in higher dimensions remains an  open problem to the best of our knowledge.
The nonlinear mean value property \eqref{MVFormula} has a game-theoretic interpretation for which it is essential that the coefficients add up to 1, so that they play the role of conditional probabilities. 
Note that only the first term in \eqref{MVFormula} is nonlinear, with more weight the larger the $p$, while the second one is linear. In particular, the case $p=2$ reduces to
\eqref{Asymptotic.MVP.Harmonic}, the asymptotic mean value property that characterizes harmonic functions.
For mean value properties for the $p$-Laplacian in the Heisenberg group see \cite{LMan} and for the standard variational $p-$Laplacian \cite{dTLin}.

Our goal in this paper is to investigate the validity of mean value properties for solutions to  the Monge-Amp\`ere equation, 
$$\det D^2u=f,$$ which  arises in many areas of analysis, geometry, and applied mathematics. Some important examples are  the prescribed Gaussian curvature problem or Minkowski problem, optimal mass transportation and the construction of antennas and reflectors, see \cite{Guido.Alessio,Figalli.2017, Glimm.Oliker,  Glimm.Oliker2,Gutierrez,Villani.topics}.

In the  Dirichlet problem for the Monge-Amp\`ere equation, one prescribes a smooth domain $\Omega\subset\mathbb{R}^n$, boundary data $g(x)$ on $\partial \Omega$, and a right-hand side $f(x)$ in $\Omega$, and studies  existence and regularity of a function $u$ that verifies
\begin{equation}\label{problem.intro.model}
\left\{
\begin{array}{ll}
 \det D^2 u=f \quad & \textrm{in}\ \Omega,\\
 u=g\quad & \textrm{on}\ \partial \Omega.
\end{array}
\right. 
\end{equation}
For the problem to fit into the framework of the theory of fully nonlinear elliptic equations, one must look for 
convex solutions $u$ to ensure that $\det D^2 u$ is
indeed a monotone function of $D^2u$. Thus, we require the right hand side $f(x)$ to be non-negative. We also assume that the domain $\Omega$ is
convex. The convexity of $\Omega$ is required in order to construct appropriate smooth subsolutions that act as lower barriers, see \cite{Ca-Ni-Sp,Ca-Ni-Sp2}.
\par
The Monge-Amp\`ere equation can be expressed as an infimum of a family of linear operators as follows,
\begin{equation}\label{first.main.ingredient.intro.88}
 n\big(\det D^2u(x)\big)^{1/n}=\inf_{\det A=1} {\rm trace}(A^tD^2u(x)A)
\end{equation}
(see Remark \ref{remark.polar.decomposition} and Lemma \ref{caract.determ} below).
Notice that the condition $\det(A)=1$ allows the matrix $A$ to have arbitrarily small eigenvalues, which shows the degeneracy of the operator; this is immediately apparent in dimension $n=2$, where the two eigenvalues of the matrix must be reciprocal in order to keep the determinant equal to 1, but one of them
can be very small if the other is very large.

As mentioned above, our goal is to establish a mean value property for a convex function $u$ that satisfies
$\det D^2u=f$.
In fact, the convexity of $u$ automatically implies that the value of $u$ at a given point cannot be above its average over a ball, that is,
\begin{equation}\label{MVF.intro.convexity}
u(x)
\leq
\dashint_{B_\varepsilon(x)}u(y)\, dy.
\end{equation}
One could also see this formula as a consequence of the subharmonicity of $u$. 
A refinement comes from the Arithmetic-Geometric mean inequality, which implies
\[
\Delta u\geq n(\det D^2u)^{1/n}=n f^{1/n}.
\]
Then, by using the asymptotic representation of the Laplacian in \eqref{Laplaciano.formula.asintotica}, it follows that  
\[
u(x)\leq\dashint_{B_\varepsilon (x)} u(y)\,dy-\varepsilon^2\frac{n}{2(n+2)}(f(x))^{1/n}+o(\varepsilon^2)\qquad\textrm{as} \ \varepsilon\to0,
\]
which improves on \eqref{MVF.intro.convexity}.
Even more, we can replace the average over the ball in the previous expression by the infimum over the family of averages of $u$ over ellipsoids that are volume-preserving affine deformations of $B_\varepsilon (x)$. 
Motivated by  equality \eqref{first.main.ingredient.intro.88} we can expect an equality to hold for this infimum. 
This is in fact the main contribution of this article.

The main results of this paper are asymptotic mean value properties for the Monge-Amp\`ere equation that hold point-wise in the $C^2$ case,
Theorem~\ref{theorem.C2.case} and Theorem~\ref{theorem.C2.case2}, and in the viscosity sense in the general case, Theorem~\ref{thm.MVP.visco} and Theorem~\ref{thm.MVP.visco2}.
Our first two results give asymptotic mean value formulas when we consider ellipsoids that do not become too degenerate near the point under consideration. 
\begin{theorem}\label{theorem.C2.case} Let   $\phi(\varepsilon)$ 
be a positive function 
such that 
\begin{equation}\label{hipotesis.phi}
 \lim_{\varepsilon\to0} \phi(\varepsilon)=\infty
 \qquad
  \textrm{and}
 \qquad
 \lim_{\varepsilon\to0} \varepsilon\,\phi(\varepsilon) =0.
\end{equation}
 Let $u$ be convex and $C^2$ in $\Omega$. 
 Then, for every $x\in\Omega$ we have 
 \begin{equation}\label{absolute.goal.C2}
\begin{split}
u(x)=
\mathop{\mathop{\inf}_{\det A=1}}_
{A\leq \phi(\varepsilon)I}
\bigg\{
\dashint_{B_{\varepsilon}(0)}
u(x+Ay)
\,dy
\bigg\}
-
\frac{n}{2(n+2)}\,
\left(\det{D^2u(x)}\right)^{1/n}
\varepsilon^2
+
o(\varepsilon^2),
\end{split}
\end{equation}
 as $\varepsilon\to0$.
\end{theorem}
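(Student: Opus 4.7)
The plan is to combine a Taylor expansion of $u$ at $x$ with the minimax characterization of the determinant from \eqref{first.main.ingredient.intro.88}. Setting $H=D^{2}u(x)$, Taylor's theorem reads
\begin{equation*}
u(x+z)=u(x)+\nabla u(x)\cdot z+\tfrac{1}{2}\,z^{t}Hz+r(z),\qquad |r(z)|\le |z|^{2}\,\omega(|z|),
\end{equation*}
where $\omega$ is a modulus of continuity of $D^{2}u$ at $x$. For a matrix $A$, I would substitute $z=Ay$ and average over $y\in B_{\varepsilon}(0)$: the linear term integrates to zero by the symmetry of the ball, while the elementary identity $\dashint_{B_{\varepsilon}(0)}y_{i}y_{j}\,dy=\tfrac{\varepsilon^{2}}{n+2}\,\delta_{ij}$ handles the quadratic term, giving
\begin{equation*}
\dashint_{B_{\varepsilon}(0)}u(x+Ay)\,dy=u(x)+\frac{\varepsilon^{2}}{2(n+2)}\operatorname{trace}\!\big(A^{t}HA\big)+\mathcal{R}(A,\varepsilon),
\end{equation*}
with remainder controlled by $|\mathcal{R}(A,\varepsilon)|\le \|A\|^{2}\,\varepsilon^{2}\,\omega(\|A\|\,\varepsilon)$.

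Taking the infimum of both sides over $\{A:\det A=1,\,A\le\phi(\varepsilon)I\}$ and invoking \eqref{first.main.ingredient.intro.88} to identify $\inf_{\det A=1}\operatorname{trace}(A^{t}HA)=n(\det H)^{1/n}$ should then recover the leading-order term in \eqref{absolute.goal.C2}. For the upper bound I would plug in a near-minimizer $A^{*}$ of $\operatorname{trace}(A^{t}HA)$: when $H$ is positive definite, the AM--GM argument underlying \eqref{first.main.ingredient.intro.88} produces a minimizer with norm controlled purely by the eigenvalues of $H$, so $A^{*}\le\phi(\varepsilon)I$ for $\varepsilon$ small (since $\phi(\varepsilon)\to\infty$) and the error reduces to $o(\varepsilon^{2})$. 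When $H$ is degenerate ($\det H=0$), the unconstrained minimizer escapes to infinity, so one instead picks a truncated minimizer $A^{*}=A^{*}(\varepsilon)$ of bounded determinant whose operator norm grows slowly enough relative to $\phi(\varepsilon)$ that simultaneously $\operatorname{trace}((A^{*})^{t}HA^{*})\to 0$ and $\|A^{*}\|^{2}\omega(\|A^{*}\|\varepsilon)\to 0$. The matching lower bound follows from the trivial inequality $\operatorname{trace}(A^{t}HA)\ge n(\det H)^{1/n}$ combined with the convexity-based observation that $\dashint_{B_{\varepsilon}(0)}u(x+Ay)\,dy\ge u(x)$ (by Jensen, since $u$ is convex), which lets one discard competitors whose averaged value is already above the conjectured infimum and restrict attention to $A$ of bounded operator norm, on which the remainder estimate is uniform.

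The main obstacle I foresee is the uniform control of $\mathcal{R}(A,\varepsilon)$ across the full admissible set: the crude bound $\phi(\varepsilon)^{2}\varepsilon^{2}\omega(\phi(\varepsilon)\varepsilon)$ need not be $o(\varepsilon^{2})$ under \eqref{hipotesis.phi} alone. The delicate step is therefore to use the convexity of $u$ and the geometric structure of the minimization to localize the infimum to matrices of effectively bounded norm, a reduction that is particularly subtle in the degenerate case $\det D^{2}u(x)=0$, where the unconstrained minimizer is not attained and the truncation $A\le\phi(\varepsilon)I$ is genuinely active.
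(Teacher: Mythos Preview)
Your plan is essentially the paper's own argument, with only cosmetic repackaging: where you carry an explicit remainder $r(z)$ with a modulus of continuity $\omega$, the paper sandwiches $u$ between the two paraboloids $P_\eta^\pm(z)=P(z)\pm\tfrac{\eta}{2}|z-x|^2$ (which is the same statement written as ``for every $\eta>0$ there is $\delta>0$ with $|r(z)|\le\tfrac{\eta}{2}|z|^2$ on $B_\delta$''). The $\eta$-perturbation has the pleasant side effect that the degenerate case $\det H=0$ reduces immediately to the strictly positive case applied to $H+\eta I$, so one never needs to construct your ``slowly growing'' truncated minimizer $A^*(\varepsilon)$; the paper's route is cleaner here, though yours also works.

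One point in your sketch is not quite right. For the lower bound you say that Jensen's inequality $\dashint u(x+Ay)\,dy\ge u(x)$ is what lets you discard large-norm competitors and localize to bounded $A$. Jensen alone gives only $\ge u(x)$, which is the full lower bound in the degenerate case but says nothing about matrices with large $\|A\|$ when $\det H>0$. The localization actually comes from the strict positivity $\lambda_{\min}(H)>0$: since $\operatorname{trace}(A^tHA)\ge\lambda_{\min}(H)\,\lambda_{\max}(A)^2$ and $\omega(\|A\|\varepsilon)\le\omega(\phi(\varepsilon)\varepsilon)\to0$ uniformly over the admissible set, your own Taylor lower bound gives $\dashint u(x+Ay)\,dy\ge u(x)+c\,\|A\|^2\varepsilon^2$ for $\varepsilon$ small, which forces the near-minimizers to have bounded norm. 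The paper isolates exactly this computation as its Lemma~\ref{lemma.uniform.ellipticity.visco}.
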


Observe that it follows from \eqref{hipotesis.phi} that the ellipsoids in \eqref{absolute.goal.C2}  shrink to $x$ as $\varepsilon\to0$. \par
Notice that Theorem \ref{theorem.C2.case} shows that a 
smooth ($C^2$) convex function is a classical solution to the Monge-Amp\`ere equation $\det D^2u=f$ in $\Omega$ if and only if
the asymptotic mean value formula
$$
u(x)=
\mathop{\mathop{\inf}_{\det A=1}}_
{A\leq \phi(\varepsilon)I}
\bigg\{
\dashint_{B_{\varepsilon}(0)}
u(x+Ay)
\,dy
\bigg\}
-
\frac{n}{2(n+2)}\,
\left(f(x)\right)^{1/n}
\varepsilon^2
+
o(\varepsilon^2),
$$
as $\varepsilon\to 0$ holds point-wise for $x \in \Omega$.

Our next result says that this mean value property extends to viscosity solutions but this time the mean value property need also
to be understood  in the  viscosity sense.

\begin{theorem}\label{thm.MVP.visco}
Let $\phi$ be a positive function satisfying \eqref{hipotesis.phi}, and $f\in C(\Omega)$ a non-negative function.
Then,
a convex function $u\in C(\Omega)$ is a viscosity  subsolution (respectively, supersolution) of the Monge-Amp\`ere equation 
\begin{equation}\label{MVP.solid.M-A.visco}
\det D^2u=f\quad \textrm{in}\ \Omega
\end{equation}
if and only if 
\begin{equation}\label{MVP.solid.visco}
\begin{split}
u(x)
\leq
\mathop{\mathop{\inf}_{\det A=1}}_
{A\leq \phi(\varepsilon)I}
\bigg\{
\dashint_{B_{\varepsilon}(0)}
u(x+Ay)
\,dy
\bigg\}
-\frac{n}{2(n+2)}\,(f(x))^{1/n}\,\varepsilon^2
+o(\varepsilon^2)
\end{split}
\end{equation}
as $\varepsilon\to0$ for $x\in \Omega$
(respectively, $\geq$) in the viscosity sense;
meaning that whenever a convex paraboloid $P$ touches $u$ from above (respectively below) at $x$, the mean value formula \eqref{MVP.solid.visco} is satisfied for the function $P$ at the point $x$.
\end{theorem}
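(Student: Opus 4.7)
The plan is to deduce Theorem~\ref{thm.MVP.visco} as an essentially immediate consequence of the pointwise result Theorem~\ref{theorem.C2.case}. The key observation is that both the viscosity condition on $u$ and the viscosity mean value formula are formulated against a convex paraboloid $P$ touching $u$ at $x$, and such a $P$ is itself $C^{2}$ and convex, so Theorem~\ref{theorem.C2.case} applies directly to $P$. The entire argument should reduce to comparing the $\varepsilon^{2}$-coefficient produced by Theorem~\ref{theorem.C2.case} with the one imposed by \eqref{MVP.solid.visco}; there are two directions and two cases (subsolution and supersolution), but all four arguments are structurally identical.

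For the direction ``viscosity subsolution $\Rightarrow$ mean value inequality'', I would fix $x\in\Omega$ and a convex paraboloid $P$ touching $u$ from above at $x$. The subsolution condition delivers $\det D^{2}P(x)\ge f(x)$, hence $(\det D^{2}P(x))^{1/n}\ge (f(x))^{1/n}$. Applying Theorem~\ref{theorem.C2.case} to the $C^{2}$ convex function $P$ yields the pointwise expansion
\[
P(x)=\mathop{\mathop{\inf}_{\det A=1}}_{A\le\phi(\varepsilon)I}\bigg\{\dashint_{B_{\varepsilon}(0)}P(x+Ay)\,dy\bigg\}-\frac{n}{2(n+2)}\bigl(\det D^{2}P(x)\bigr)^{1/n}\varepsilon^{2}+o(\varepsilon^{2}),
\]
and replacing the determinant term by the smaller $(f(x))^{1/n}$ converts this identity into the inequality \eqref{MVP.solid.visco} written for $P$, which is exactly the viscosity form of the mean value statement.

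For the converse, I would take the same paraboloid $P$ touching $u$ from above, write the hypothesis \eqref{MVP.solid.visco} for $P$, subtract the pointwise identity supplied by Theorem~\ref{theorem.C2.case}, divide by $\varepsilon^{2}$, and let $\varepsilon\to 0$. The infimum-of-averages terms cancel and the $o(1)$ remainder vanishes, leaving $(\det D^{2}P(x))^{1/n}\ge (f(x))^{1/n}$, which is the viscosity subsolution condition at $x$. The supersolution case would be handled identically with paraboloids touching from below and all inequalities reversed.

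I do not anticipate a substantive obstacle, since the conceptual content of the argument is entirely carried by Theorem~\ref{theorem.C2.case}. The only point to watch is that the asymptotic remainder $o(\varepsilon^{2})$ is taken along a single smooth function $P$, not along $u$, so no uniformity of the expansion in the matrix $A$ is required, and the hypothesis $\varepsilon\,\phi(\varepsilon)\to 0$ from \eqref{hipotesis.phi} (which guarantees that the ellipsoids $x+AB_{\varepsilon}(0)$ shrink to $x$) is already built into the pointwise statement of Theorem~\ref{theorem.C2.case}.
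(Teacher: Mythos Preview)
Your proposal is correct and follows essentially the same approach as the paper: both directions are obtained by applying Theorem~\ref{theorem.C2.case} to the test paraboloid $P$ and comparing the $\varepsilon^2$-coefficient $(\det D^2P(x))^{1/n}$ with $(f(x))^{1/n}$, then dividing by $\varepsilon^2$ and letting $\varepsilon\to0$ for the converse. The supersolution case is handled symmetrically in both your outline and the paper.
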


The restriction $A\leq \phi(\varepsilon)I$ imposed to the set of matrices over which the infimum is taken in \eqref{absolute.goal.C2} makes the formula local.
For every $x\in\Omega$, the conditions  $A\leq \phi(\varepsilon)I$  and $|y|\leq \varepsilon$ imply that $x+Ay\in\Omega$ for $\varepsilon$ small enough; 
in fact, we have
\[
\textrm{dist}(x+Ay,x)=|Ay|\leq  \varepsilon\, \phi(\varepsilon)  \leq \textrm{dist}(x,\partial\Omega)
\]
for $\varepsilon$ sufficiently small (since $\varepsilon\,\phi(\varepsilon)\to 0$ as $\varepsilon \to 0$).
As an example of a function $\phi$ that verifies \eqref{hipotesis.phi} we mention $\phi(\varepsilon)=\varepsilon^{-\alpha}$ for $0<\alpha<1$.

Strictly convex paraboloids satisfy  \eqref{absolute.goal.C2} without the
remainder $o(\varepsilon^2)$.
However, when $\lambda=0$ is an eigenvalue of the Hessian matrix of the paraboloid, the remainder 
$o(\varepsilon^2)$ appears due to the restriction $A\leq \phi(\varepsilon) I$;
see Example \ref{example.paraboloids.what.happens.with.zero.eigs} below for details.

Next,  we present a different mean value formula where we do not impose the condition $A\leq \phi(\varepsilon)I$.
The statement for this formula is more natural and for convex paraboloids it holds without the remainder.
On the other hand, this formula is not local since the paraboloids need not shrink to $x$
as $\varepsilon \to 0$. 

In our formula we consider averages of $u$ over ellipsoids depending on $A$ and $\varepsilon$.
We must require that this ellipsoid is  included in the domain of definition of the function $u$.
Therefore we restrict the infimum over matrices $A$ for which the ellipsoid
\[
E_\varepsilon (A,x) =
\big\{y\in\mathbb{R}^n:\  |A^{-1}(y-x)|\leq \varepsilon\big\}=
\big\{x+Ay:\  y\in B_{\varepsilon}(0)\big\}
\]
 is contained in $\Omega$.

\begin{theorem}\label{theorem.C2.case2} 
 Let $u$ be convex and $C^2$ in $\Omega$. 
 Then, for every $x\in\Omega$
 \begin{equation}\label{absolute.goal.C2.2}
\begin{split}
u(x)=
\mathop{\mathop{\inf}_{\det A=1}}_
{E_\varepsilon (A,x) \subset\Omega}\bigg\{
\dashint_{B_{\varepsilon}(0)}
u(x+Ay)
\,dy
\bigg\}
-
\frac{n}{2(n+2)}\,
\left(\det{D^2u(x)}\right)^{1/n}
\varepsilon^2
+
o(\varepsilon^2),
\end{split}
\end{equation}
asymptotically as $\varepsilon\to0$.
\end{theorem}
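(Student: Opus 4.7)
The plan is to derive matching asymptotic upper and lower bounds for
$$I(\varepsilon)\;:=\;\inf_{\det A=1,\,E_\varepsilon(A,x)\subset\Omega}\dashint_{B_\varepsilon(0)}u(x+Ay)\,dy.$$
The upper bound follows from Theorem~\ref{theorem.C2.case} by comparing feasible sets. With any $\phi$ satisfying~\eqref{hipotesis.phi} (e.g.\ $\phi(\varepsilon)=\varepsilon^{-1/2}$), the condition $A\leq\phi(\varepsilon)I$ forces $E_\varepsilon(A,x)\subset B_{\varepsilon\phi(\varepsilon)}(x)\subset\Omega$ for small $\varepsilon$. Hence the feasible set of Theorem~\ref{theorem.C2.case} sits inside the feasible set of Theorem~\ref{theorem.C2.case2}, and Theorem~\ref{theorem.C2.case} delivers $I(\varepsilon)\leq u(x)+\frac{n\varepsilon^2}{2(n+2)}(\det D^2u(x))^{1/n}+o(\varepsilon^2)$.

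For the matching lower bound, set $\psi(z):=u(x+z)-u(x)-\nabla u(x)\cdot z$, non-negative by convexity, with $\psi(0)=\nabla\psi(0)=0$ and $D^2\psi(0)=D^2u(x)$. The linear term averages to zero, so $\dashint u(x+Ay)\,dy-u(x)=\dashint\psi(Ay)\,dy$, and Remark~\ref{remark.polar.decomposition} allows me to restrict to symmetric positive definite $A$. The degenerate case $\det D^2u(x)=0$ is then immediate from $\psi\geq 0$, matching the vanishing leading-order target.

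Assume $\det D^2u(x)>0$, so $D^2u(x)\geq\lambda I$ for some $\lambda>0$; let $\omega$ denote the modulus of continuity of $D^2u$ at $x$, and pick an auxiliary scale $r=r(\varepsilon)\to 0$ with $r/\varepsilon\to\infty$, for instance $r=\varepsilon^{1/4}$. I split admissible $A$ by $\Lambda:=\|A\|$. If $\Lambda\leq 2r/\varepsilon$, then $|Ay|\leq 2r$ throughout $B_\varepsilon$, and Taylor's theorem with integral remainder combined with $\|D^2u(x+w)-D^2u(x)\|\leq\omega(|w|)$ gives $\psi(Ay)\geq\frac{1}{2}(Ay)^T(D^2u(x)-\omega(2r)I)(Ay)$. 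Integrating using $\dashint_{B_\varepsilon}y_iy_j\,dy=\frac{\varepsilon^2}{n+2}\delta_{ij}$ and applying Lemma~\ref{caract.determ} to the matrix $D^2u(x)-\omega(2r)I$ (which is PD once $r$ is small enough that $\omega(2r)<\lambda$) together with $\det A=1$ yields
$$\dashint\psi(Ay)\,dy\geq\frac{n\varepsilon^2}{2(n+2)}\bigl(\det(D^2u(x)-\omega(2r)I)\bigr)^{1/n}=\frac{n\varepsilon^2}{2(n+2)}(\det D^2u(x))^{1/n}+o(\varepsilon^2),$$
since $\omega(2r)\to 0$. If instead $\Lambda>2r/\varepsilon$, convexity of $\psi$ with $\psi(0)=0$ gives $\psi(z)\geq(|z|/r)\psi(rz/|z|)$; combined with the Taylor-based strong-convexity estimate $\psi(rz/|z|)\geq\frac{\lambda r^2}{4}$ (valid when $\omega(r)\leq\lambda/2$), this yields $\psi(z)\geq\frac{\lambda r}{4}|z|$ for all $|z|\geq r$. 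In the eigenbasis of $A$ the region $\{y\in B_\varepsilon:y_1\geq r/\Lambda\}$ has positive measure (since $r/\Lambda<\varepsilon/2$) and $|Ay|\geq\Lambda y_1\geq r$ there, so direct integration gives $\dashint\psi(Ay)\,dy\gtrsim r\Lambda\varepsilon\geq 2r^2\gg\varepsilon^2$; such matrices cannot be near the infimum.

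The main obstacle is the joint selection of $r(\varepsilon)$: it must shrink so that $\omega(r)\to 0$ (so the Taylor step contributes only an $o(\varepsilon^2)$ error), while $r/\varepsilon\to\infty$ so that the bounded-norm optimizer $A^\ast=(\det D^2u(x))^{1/(2n)}(D^2u(x))^{-1/2}$ of the limiting trace problem lies in the small-norm regime, and $r^2\gg\varepsilon^2$ so that the large-norm regime genuinely dominates the target $O(\varepsilon^2)$. Combining the two regimes produces $I(\varepsilon)\geq u(x)+\frac{n\varepsilon^2}{2(n+2)}(\det D^2u(x))^{1/n}+o(\varepsilon^2)$, matching the upper bound.
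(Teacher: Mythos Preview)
Your proof is correct and follows the same overall architecture as the paper: the upper bound on $I(\varepsilon)$ comes from Theorem~\ref{theorem.C2.case} via inclusion of feasible sets, the degenerate case $\det D^2u(x)=0$ is immediate from $\psi\ge0$, and in the strictly convex case you show that matrices with large norm cannot compete for the infimum.

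The execution differs from the paper's in one respect worth noting. The paper isolates the ``large matrices don't compete'' step into Lemma~\ref{keylemma}, which argues by contradiction: if a sequence of near-optimal ellipsoids failed to concentrate, the strict convexity bound $u\ge c\delta^2$ away from $x_0$ would force the averages to stay bounded below by a positive constant, contradicting the $o(\varepsilon^2)$ upper bound coming from Theorem~\ref{theorem.C2.case}. You instead introduce an explicit intermediate scale $r(\varepsilon)$ with $\varepsilon\ll r\ll1$ and split the feasible set at the threshold $\|A\|=2r/\varepsilon$, giving a direct quantitative lower bound in each regime. Your convexity estimate $\psi(z)\ge\tfrac{\lambda r}{4}|z|$ for $|z|\ge r$ is a sharper version of the paper's $u\ge c\delta^2$, and the resulting bound $\dashint\psi(Ay)\,dy\gtrsim r^2\gg\varepsilon^2$ in the large-norm regime plays exactly the role of the contradiction in Lemma~\ref{keylemma}. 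The direct argument is slightly more self-contained (it does not invoke the conclusion of Theorem~\ref{theorem.C2.case} in the lower-bound half), at the cost of having to track the auxiliary scale $r$ explicitly; the paper's contradiction argument is shorter but relies on having Theorem~\ref{theorem.C2.case} already in hand for both directions.
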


As it was earlier the case, we have that the mean value formula holds point-wise for smooth functions and characterizes smooth
solutions to Monge-Amp\`ere, but when we want to extend the characterization for viscosity solutions the formula
has to be interpreted in the viscosity sense.

\begin{theorem}\label{thm.MVP.visco2}
Let $f\in C(\Omega)$ be a non-negative function.
Then,
a convex function $u\in C(\Omega)$ is a viscosity  subsolution (respectively, supersolution) of the Monge-Amp\`ere equation 
\begin{equation}\label{MVP.solid.M-A.visco.88}
\det D^2u=f\quad \textrm{in}\ \Omega
\end{equation}
if and only if 
\begin{equation}\label{MVP.solid.visco2}
\begin{split}
u(x)
\leq
\mathop{\mathop{\inf}_{\det A=1}}_
{E_\varepsilon (A,x) \subset\Omega}\bigg\{
\dashint_{B_{\varepsilon}(0)}
u(x+Ay)
\,dy
\bigg\}
-\frac{n}{2(n+2)}\,(f(x))^{1/n}\,\varepsilon^2
+o(\varepsilon^2)
\end{split}
\end{equation}
as $\varepsilon\to0$
(respectively, $\geq$) in the viscosity sense for $x\in \Omega$;
meaning that whenever a convex paraboloid $P$ touches $u$ from above (respectively below) at $x$, the mean value formula \eqref{MVP.solid.visco2} is satisfied for the function $P$ at the point $x$.
\end{theorem}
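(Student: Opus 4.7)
The plan is to reduce both directions in Theorem \ref{thm.MVP.visco2} to the pointwise asymptotic equality of Theorem \ref{theorem.C2.case2}, applied not to $u$ itself but to the convex paraboloid $P$ that plays the role of test function. Since any such $P$ is $C^2$ and convex on all of $\mathbb{R}^n$, Theorem \ref{theorem.C2.case2} gives the exact identity
\begin{equation*}
P(x)=\mathop{\mathop{\inf}_{\det A=1}}_{E_\varepsilon(A,x)\subset\Omega}\bigg\{\dashint_{B_\varepsilon(0)} P(x+Ay)\,dy\bigg\}-\frac{n}{2(n+2)}\bigl(\det D^2 P(x)\bigr)^{1/n}\varepsilon^2+o(\varepsilon^2).
\end{equation*}
With this identity in hand, the viscosity condition reduces to the scalar comparison between $\det D^2 P(x)$ and $f(x)$, and no further asymptotic analysis is needed.

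First I would dispatch the forward direction. Assuming $u$ is a viscosity subsolution of \eqref{MVP.solid.M-A.visco.88} and that a convex paraboloid $P$ touches $u$ from above at $x$, I have $\det D^2 P(x)\geq f(x)$. Since $t\mapsto t^{1/n}$ is monotone on $[0,\infty)$, replacing $(\det D^2 P(x))^{1/n}$ by the smaller $(f(x))^{1/n}$ in the identity above converts the equality into the desired inequality \eqref{MVP.solid.visco2} for $P$ at $x$.

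For the converse, I would assume the viscosity inequality \eqref{MVP.solid.visco2} holds and take an arbitrary convex paraboloid $P$ touching $u$ from above at $x$. Substituting the identity into the hypothesis and cancelling the common infimum term yields
\begin{equation*}
\tfrac{n}{2(n+2)}\bigl(f(x)\bigr)^{1/n}\varepsilon^2 \leq \tfrac{n}{2(n+2)}\bigl(\det D^2 P(x)\bigr)^{1/n}\varepsilon^2 + o(\varepsilon^2).
\end{equation*}
Dividing by $\varepsilon^2$ and letting $\varepsilon\to 0$ gives $\det D^2 P(x)\geq f(x)$, so $u$ is a viscosity subsolution. The supersolution case is entirely symmetric: test paraboloids touch from below, all inequalities reverse, and the same reduction applies.

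The only genuine work to be done is in Theorem \ref{theorem.C2.case2} itself; the present theorem is an almost mechanical consequence of it. The main obstacle, which already had to be confronted in the proof of Theorem \ref{theorem.C2.case2}, is the interaction between the constraint $E_\varepsilon(A,x)\subset\Omega$ and the degenerate case where $D^2P(x)$ has a zero eigenvalue: along a minimizing sequence of $A$ in \eqref{first.main.ingredient.intro.88} the matrix wants unbounded eigenvalues in the direction of the kernel of $D^2 P(x)$, and one must verify that the ellipsoid constraint allows enough degeneracy to incur only an $o(\varepsilon^2)$ deficit. When $D^2 P(x)$ is strictly positive the optimal $A$ is bounded and the constraint is inert for small $\varepsilon$, so \eqref{MVP.solid.visco2} even holds without the $o(\varepsilon^2)$ remainder; this is the delicate point that is hidden in the reduction.
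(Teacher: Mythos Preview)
Your proposal is correct and follows exactly the approach the paper indicates: the paper explicitly states that Theorem~\ref{thm.MVP.visco2} follows from Theorem~\ref{theorem.C2.case2} in the same way that Theorem~\ref{thm.MVP.visco} follows from Theorem~\ref{theorem.C2.case}, and your reduction---applying Theorem~\ref{theorem.C2.case2} to the test paraboloid $P$, then comparing $(\det D^2P(x))^{1/n}$ with $(f(x))^{1/n}$, dividing by $\varepsilon^2$ and letting $\varepsilon\to0$---is precisely that argument.
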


Notice that we always have
\begin{equation}\label{eq.remark.different.infima}
\begin{split}
\inf_{\det A=1}
\dashint_{B_{\varepsilon}}
u(x+Ay)
\,dy
\leq
\mathop{\mathop{\inf}_{\det A=1}}_
{A\leq \phi(\varepsilon)I}
\dashint_{B_{\varepsilon}}
u(x+Ay)
\,dy.
\end{split}
\end{equation}
The equality holds for strictly convex paraboloids and for smooth functions the difference is of order $o(\varepsilon^2)$.
The difference  is not of order $o(\varepsilon^2)$ for viscosity solutions, see Example~\ref{Ex-MVP-viscosa} below. 
Therefore, Theorem~\ref{theorem.C2.case} and Theorem~\ref{theorem.C2.case2} are different in nature, they do not imply each other in an obvious way.
The infimum over ellipsoids with the restriction $A\leq \phi(\varepsilon)I $
is bigger or equal than the   infimum over ellipsoids contained in $\Omega$ for $\varepsilon$ small enough
(because there are more ellipsoids contained in $\Omega$ than  satisfying the  restriction $A\leq \phi(\varepsilon)I$). 
To check whether $u$ is a subsolution of equation \eqref{MVP.solid.M-A.visco.88}  the upper bound for  $u(x)$ with restricted ellipsoids is more convenient. 
On the other hand, the lower bound for $u(x)$ using ellipsoids contained in $\Omega$ is
more convenient  when checking whether $u(x)$ is a supersolution of the same equation. 
\par
Several remarks are in order.
\par
\begin{remark}
Analogous formulas can be obtained by considering the integral over the boundary of the ellipsoids.
That is, for $u$ convex and $C^2$ in $\Omega$ we have 
\begin{equation}\label{MVP.surface}
\begin{split}
u(x)
&=
\mathop{\mathop{\inf}_{\det A=1}}_
{E_\varepsilon (A,x) \subset\Omega}\bigg\{
\dashint_{\partial B_{\varepsilon}(0)}
u(x+Ay)
\,d\mathcal{H}^{n-1}(y)
\bigg\}
-\frac{1}{2}\,\left(\det{D^2u(x)}\right)^{1/n}\,{\varepsilon}^2
+o(\varepsilon^2)\\
&=
\mathop{\mathop{\inf}_{\det A=1}}_
{A\leq \phi(\varepsilon)I}
\bigg\{
\dashint_{\partial B_{\varepsilon}(0)}
u(x+Ay)
\,d\mathcal{H}^{n-1}(y)
\bigg\}
-\frac{1}{2}\,\left(\det{D^2u(x)}\right)^{1/n}\,{\varepsilon}^2
+o(\varepsilon^2)
\end{split}
\end{equation}
as $\varepsilon\to0$.
See Remark~\ref{proof.border.formula} below. 
These formulas also hold in the viscosity sense  and they characterize viscosity solutions 
to the Monge-Amp\`ere equation.
\end{remark}

\begin{remark}\label{remark.polar.decomposition}
 We  assume  without loss of generality that the matrices $A$ in Theorems \ref{theorem.C2.case} and \ref{thm.MVP.visco}, 
and in general
 throughout this paper are symmetric and positive definite. This follows from the (unique) left polar decomposition of $A$, namely $A = SQ$,
where $Q$ is orthogonal and $S$ is a positive definite symmetric matrix. In fact, it is easy to see that $S=(AA^t)^{1/2}$ and $Q=S^{-1}A$.
\end{remark}

\begin{remark} \label{remark18} Assume that $u\in C^2(\Omega)$ and that all the eigenvalues of $D^2u$ are positive (notice that in this case $u$ is strictly convex). 
For every $x\in\Omega$,  the matrix
\[
A^*=\left(\det{D^2u(x)}\right)^\frac{1}{2n}D^2u(x)^{-\frac{1}{2}}
\]
has determinant equal to 1 and satisfies $A^*\leq\phi( \varepsilon)I$ for $\varepsilon$ small enough. Therefore, for $\varepsilon$ small we have
\[
\begin{split}
n\det(D^2u(x))^{1/n}
&=
{\rm trace}((A^*)^tD^2u(x)A^*)
\\
&=
 \inf_{\det A=1}{\rm trace}\big(A^tD^2u(x)A\big)
\\
&=
 \inf_{\det A=1,\
A\leq \phi(\varepsilon)} 
 {\rm trace}\big(A^tD^2u(x)A\big),
\end{split}
\]
by Lemma \ref{lemma.uniform.ellipticity.visco} below. 
Moreover, $A^*$ realizes the infimum in \eqref{absolute.goal.C2} asymptotically; that is, making the choice $A=A^*$ in the argument leading to \eqref{proof.C2.case.aux.for.remark.in.intro} in the proof of Theorem \ref{theorem.C2.case}, plus 
\eqref{C2.case.second.part},
gives
\[
u(x)
=
\dashint_{B_{\varepsilon}(0)}
u(x+A^*y)\,dy
-\frac{n}{2(n+2)}\,\left(\det{D^2u(x)}\right)^{1/n}\,\varepsilon^2
+o(\varepsilon^2)
\]
for $\varepsilon$ small enough. Then, comparing with \eqref{absolute.goal.C2},
we get
\[
\inf_{\det A=1,\
A\leq \phi(\varepsilon) I } \bigg\{
\dashint_{B_{\varepsilon}(0)}
u(x+Ay)\,dy
\bigg\}
=
\dashint_{B_{\varepsilon}(0)}
u(x+A^*y)\,dy
+o(\varepsilon^2),
\]
as desired.
\end{remark}

After proving the main theorems we present several examples.
In Examples \ref{equaltyforallmatrices} and \ref{example.paraboloids.what.happens.with.zero.eigs} we study the behavior of the mean value formulas for paraboloids.
Example \ref{example.fourth.order.negative} shows that the hypothesis of convexity is needed.
In Example \ref{example4.4} we present a viscosity solution to the Monge-Amp\`ere equation that is not 
classical, but for which the asymptotic mean value property holds in the point-wise sense. 
Finally, Examples \ref{Ex-MVP-viscosa} and \ref{ejemplopablo} provide examples of a viscosity solution to the Monge-Amp\`ere 
equation such that the mean value property does not hold point-wise, although
it holds in the viscosity sense by Theorem \ref{thm.MVP.visco}.

Finally, we want to include a discrete mean value formula for Monge-Amp\`ere. This is similar to formula \eqref{Asymptotic.MVP.Harmonic.discrete}
for harmonic functions. 

\begin{theorem}\label{eq.mena.value.charact} 
Let $u$ be a 
convex
function in a domain $\Omega\subset\mathbb{R}^n$ and let $\phi$ be 
a positive function satisfying \eqref{hipotesis.phi}.  Let us denote by $\mathbb{O}$ the set of all orthonormal bases $V=\{v_1, \ldots, v_n\}$ of
$\mathbb{R}^n$. For $\varepsilon>0$ we define the sets  
 \[
I_\varepsilon^n=
\Big\{(\alpha_1, \dots, \alpha_n ) \in\mathbb{R}^n:
\prod_{j=1}^n \alpha_{j}=1
\quad \text{ and }\quad 
0<\alpha_{j}< \phi^2(\varepsilon) \Big\}.
\]
We have that the 
asymptotic expansion 
$$
\displaystyle u (x) = \inf_{ V\in \mathbb{O}}
\ \inf_{\alpha_{i}\in I_\varepsilon^n} \left\{ \frac{1}{n}
 \sum_{i=1}^n \frac12 u (x + \varepsilon \sqrt{\alpha_i} v_i) + \frac12 u (x - \varepsilon \sqrt{\alpha_i} v_i) 
\right\}  -\frac{\varepsilon^2}{2} (f(x))^{1/n} + o(\varepsilon^2)
$$
as $\varepsilon \to 0$,
holds in the viscosity sense
if and only if $u$ is a solution to the Monge-Amp\`ere equation
$$
\det (D^2 u (x)) = f(x)
$$
in the viscosity sense. \end{theorem}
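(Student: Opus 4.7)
The plan is to reduce the discrete formula to the same algebraic characterization of the determinant that drives the proofs of Theorems~\ref{theorem.C2.case}--\ref{thm.MVP.visco2}. Given an orthonormal basis $V=\{v_1,\ldots,v_n\}\in\mathbb{O}$ and positive scalars $\alpha_1,\ldots,\alpha_n$, the symmetric positive-definite matrix $A := \sum_{i=1}^n \sqrt{\alpha_i}\, v_i v_i^t$ satisfies $\det A = 1 \Leftrightarrow \prod_i \alpha_i = 1$ and $A \leq \phi(\varepsilon) I \Leftrightarrow \max_i \alpha_i \leq \phi^2(\varepsilon)$, while $\operatorname{trace}(A^t M A) = \sum_i \alpha_i \langle M v_i,v_i\rangle$ for any symmetric matrix $M$. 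Thus the discrete infimum over $(V,\alpha)\in\mathbb{O}\times I_\varepsilon^n$ is, up to the constraint, the same infimum appearing in the identity $n(\det M)^{1/n} = \inf_{\det A = 1}\operatorname{trace}(A^t M A)$ of Lemma~\ref{caract.determ}.

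Because the formula is to be checked only in the viscosity sense, I test with a convex paraboloid $P(y) = P(x) + b\cdot(y-x) + \tfrac12 (y-x)^t M (y-x)$, for which the symmetric second difference is exact and the odd-order terms cancel:
$$\frac{1}{n}\sum_{i=1}^n \frac{P(x+\varepsilon\sqrt{\alpha_i} v_i) + P(x-\varepsilon\sqrt{\alpha_i} v_i)}{2} = P(x) + \frac{\varepsilon^2}{2n}\sum_{i=1}^n \alpha_i \langle Mv_i, v_i\rangle.$$
The whole problem then reduces to establishing
$$\lim_{\varepsilon \to 0}\ \inf_{V\in\mathbb{O},\,\alpha\in I_\varepsilon^n}\ \frac{1}{n}\sum_{i=1}^n \alpha_i \langle Mv_i, v_i\rangle = (\det M)^{1/n}.$$
The lower bound is by AM--GM (using $\prod\alpha_i = 1$) combined with Hadamard's inequality $\prod_i \langle Mv_i,v_i\rangle \geq \det M$ for positive semi-definite $M$. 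For the matching upper bound, I take $V$ equal to an eigenbasis of $M$ with eigenvalues $\lambda_i$: when $M>0$, the choice $\alpha_i = (\det M)^{1/n}/\lambda_i$ is bounded and eventually admissible, realizing the lower bound exactly; when $M$ has a nontrivial kernel, assigning $\alpha_i$ just below $\phi^2(\varepsilon)$ on the null directions and distributing the remaining product-$1$ constraint on the positive directions drives $\tfrac1n \sum \alpha_i \lambda_i$ to $0 = (\det M)^{1/n}$, thanks to $\phi(\varepsilon)\to\infty$.

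Combining the two steps gives that the discrete infimum in the statement equals $P(x) + \tfrac{\varepsilon^2}{2}(\det D^2 P(x))^{1/n} + o(\varepsilon^2)$. The equivalence claimed in the theorem is then immediate: the $\leq$ inequality applied to a convex paraboloid $P$ touching $u$ from above at $x$ becomes, after subtracting $P(x)$ and dividing by $\varepsilon^2/2$, the condition $(\det D^2 P(x))^{1/n} \geq (f(x))^{1/n} + o(1)$, which is precisely the viscosity subsolution condition $\det D^2 P(x) \geq f(x)$; the supersolution case is symmetric. The main obstacle is the degenerate regime $\det M = 0$ (see Example~\ref{example.paraboloids.what.happens.with.zero.eigs}), where the optimizer in Lemma~\ref{caract.determ} is unbounded; here the growth $\phi(\varepsilon) \to \infty$ in \eqref{hipotesis.phi} is exactly what lets the admissible $\alpha_i$ approximate the infimum, playing the same regularizing role as the constraint $A \leq \phi(\varepsilon)I$ in Theorem~\ref{theorem.C2.case}.
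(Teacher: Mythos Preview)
Your proof is correct and follows essentially the same route as the paper's: reduce, via the exact second-order expansion of a test paraboloid, to the algebraic identity $\inf_{V,\alpha}\tfrac1n\sum_i\alpha_i\langle Mv_i,v_i\rangle=(\det M)^{1/n}$, then read off the equivalence with the viscosity inequalities. Two minor differences are worth noting. First, your lower bound via AM--GM plus Hadamard's inequality $\prod_i\langle Mv_i,v_i\rangle\geq\det M$ is a clean direct argument; the paper instead invokes Lemma~\ref{caract.determ} through the correspondence $A=\sum_i\sqrt{\alpha_i}\,v_iv_i^t$ you identified. Second, the paper restricts (without comment) to strictly convex test paraboloids, which sidesteps the degenerate case $\det M=0$; you instead treat that case head-on by pushing the $\alpha_i$ on the null directions toward $\phi^2(\varepsilon)$, which is a bit more complete and makes explicit where the hypothesis $\phi(\varepsilon)\to\infty$ enters.
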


Note the similarity with the mean value formula \eqref{MVP.solid.visco}
in the sense that $\sqrt{\alpha_i}$ can be regarded as analogous to the eigenvalues of $A$ (and hence the condition $\prod_i \alpha_i =1$
is analogous to $\det (A) =1$) while the orthonormal basis $\{v_i\}$ is analogous to  a basis of eigenvectors of $A$. 

\medskip

The paper is organized as follows. In Section \ref{sect.not.and.prelim} we provide some necessary preliminaries and information 
about  general viscosity theory for Monge-Amp\`ere. In 
Section \ref{section.viscosity.case} we provide the proofs of Theorems \ref{theorem.C2.case},  \ref{thm.MVP.visco}, \ref{theorem.C2.case2} and  \ref{thm.MVP.visco2}.
In Section \ref{sect.examples} we discuss some particular examples. 
Finally, in Section \ref{sect.another.MVF} we prove the discrete version of the mean value formula.

\section{Notation and preliminaries}\label{sect.not.and.prelim}

In this section we set the notation, recall basic results on the Monge-Amp\`ere equation,  and state some definitions.

For symmetric square matrices, $A>0$ means positive definite and $A\geq0$ means positive semidefinite. We will denote $\lambda_i(A)$ the eigenvalues of $A$, in particular $\lambda_{\min}(A)$ and $\lambda_{\max}(A)$ are the smallest and largest eigenvalues, respectively.

We will denote the $k$-dimensional ball of radius 1 and center 0 by $B_1^{(k)}(0)=\{x\in\mathbb{R}^k:\ |x|\leq1\}$  and  the corresponding $(k-1)$-dimensional sphere by $\partial B_1^{(k)}(0)=\{x\in\mathbb{R}^k:\ |x|=1\}$. Whenever $k$ is clear from the context, it will be omitted.
We use the notation $\mathcal{H}^{k}$ for the $k$-dimensional Hausdorff measure.

A $C^2$ function $P(x)$ is a paraboloid if and only if it coincides with its second order Taylor expansion, i.e., we have
\[
P(x)=P(x_0)+\langle\nabla P(x_0),(x-x_0)\rangle+\frac12\langle D^2 P(x_0)(x-x_0),(x-x_0)\rangle
\]
for any given $x_0$. Furthermore, $P(x)$ is a convex paraboloid if and only if $D^2P\geq0$.

\medskip

We recall from \cite{Caf90a, Figalli.2017, Gutierrez} the notion of viscosity solution used in the sequel, see  also \cite{Caf90b, Caf91, Caf93, 
lions, lions2,Trudinger3, Trudinger2} for additional references on Monge-Amp\`ere equations.

\begin{definition}
Let $u\in C(\overline\Omega)$ be a convex  function and $f\in C(\Omega)$, $f\geq0$. The function  $u$ is a viscosity supersolution (subsolution) of the Monge-Amp\`ere equation
\begin{equation}\label{blacksabbath}
\det(D^2u)=f\qquad\text{in}\ \Omega,
\end{equation}
if for every
convex paraboloid $P$
 that touches $u$ from below at $x_0\in\Omega$  (respectively from above) we have
\[
\det(D^2P(x_0))\leq f(x_0) \quad\textrm{(respectively $\geq f(x_0)$)}.
\]
\end{definition}

\begin{remark}\label{conceptos.solucion.MA}
There are other notions of generalized solution of  \eqref{blacksabbath} in the literature.
The notion of \emph{admissible weak solution} (see \cite{Trudinger2}) is equivalent to the notion of  \emph{generalized solution in the sense of  Aleksandrov} (see \cite[Section 1.2]{Gutierrez} and also \cite{Aleksandrov,Bakelman,Caf90a,Pogorelov}). When $f\in C(\Omega)$, a convex function $u\in C(\overline\Omega)$, solution of \eqref{blacksabbath} in the Aleksandrov sense (or admissible weak sense) is a viscosity solution; the converse is true whenever $f>0$ in $\overline\Omega$ (see \cite[Section 2]{Caf90a} and \cite[Propositions 1.3.4 and 1.7.1]{Gutierrez}).
\end{remark}

There is considerable work in the literature establishing existence, uniqueness and regularity of solutions to \eqref{problem.intro.model}, see \cite{ Ca-Ni-Sp, Ca-Ni-Sp2, Figalli.2017, Gutierrez, Trudinger.Wang} and the references therein. The main ingredients of this theory are summarized as follows:

\noindent1. The Monge-Amp\`ere operator, when written as $(\det (D^2u))^{1/n}$, is concave fully nonlinear. It can be expressed as an infimum of a family of linear operators as follows,
\begin{equation}\label{first.main.ingredient.intro}
 n\big(\det D^2u(x)\big)^{1/n}=\inf_{\det A=1} {\rm trace}(A^tD^2u(x)A)
\end{equation}
This characterization is a well-known, general property for non-negative matrices, proved below in  Lemma \ref{caract.determ} for the sake of completeness.
Notice that the operator is degenerate since the condition $\det(A)=1$ allows the matrix $A$ to have arbitrarily small eigenvalues.
This idea has been used in \cite{Gaveau} to obtain an interpretation of Monge-Amp\`ere in terms of optimal control.

\noindent2. The fact that $\det D^2u$ can be represented as a concave fully nonlinear operator implies that pure second derivatives are subsolutions of an equation with bounded measurable coefficients and as such, are bounded from above. For that purpose, the boundary and data must be smooth and the 
domain strictly convex.  In fact, the geometry of the domain is an important issue in the regularity theory for degenerate operators depending on the eigenvalues of the Hessian, see \cite{Ca-Ni-Sp2}.

 \noindent3. The last ingredient of the theory is that for a convex solution of the equation  with $f$ strictly positive, the linearized  operator is uniformly elliptic  and general regularity theory applies. In particular, the Evans-Krylov theorem implies that solutions are $C^{2,\alpha}$ and from there, as smooth as two derivatives better than $f$.

Let us  state precisely what we understand by \lq\lq satisfying the mean value 
property in a viscosity sense\rq\rq. First, recall that  given  a constant $c$ and a real function $g$ 
$$c\le g(\varepsilon) + o(\varepsilon^2)\text{ as } \varepsilon\to 0$$ whenever we have
$$\lim_{\varepsilon\to 0} \frac{\left[ c-g(\varepsilon)\right]^+}{\varepsilon^2}=0, $$
and 
$$c\ge g(\varepsilon) + o(\varepsilon^2)\text{ as } \varepsilon\to 0$$ whenever we have
$$\lim_{\varepsilon\to 0} \frac{\left[ c-g(\varepsilon)\right]^-}{\varepsilon^2}=0.$$

\begin{definition} \label{def.asymp.mean} Let  $\phi$ be a positive function satisfying \eqref{hipotesis.phi}. A function $u$ verifies the mean value formula
$$
u(x)
=
\mathop{\mathop{\inf}_{\det A=1,}}_{A\leq \phi(\varepsilon)I} \bigg\{
\dashint_{B_{\varepsilon}(0)}
u(x+Ay)\,dy
\bigg\}
-\frac{n}{2(n+2)}\,(f(x))^{1/n}\,\varepsilon^2
+o(\varepsilon^2)
$$
as $\varepsilon\to0$
in the viscosity sense if for every convex paraboloid $P$ that touches $u$ from above (respectively from below) at $x$, it holds that
$$
P(x)
\leq (\geq)
\mathop{\mathop{\inf}_{\det A=1,}}_{A\leq \phi(\varepsilon)I} \bigg\{
\dashint_{B_{\varepsilon}(0)}
P(x+Ay)\,dy
\bigg\}
-\frac{n}{2(n+2)}\,(f(x))^{1/n}\,\varepsilon^2
+o(\varepsilon^2)
$$
as $\varepsilon\to0$.
\end{definition}

 An analogous definition holds when the   infimum is taken over ellipsoids contained in $\Omega$. In fact, ellipsoids play a very important role in the geometry of Monge-Amp\`ere.
Let us recall the notation used in Theorems \ref{theorem.C2.case2} and \ref{thm.MVP.visco2}.

\begin{definition}\label{def.ellipsoids}
Given a real number $\varepsilon >0$, a point $x\in\mathbb{R}^n$ and   a matrix  $A$ with $\det A=1$, the set 
\begin{equation}\label{definition.ellipsoids}
\begin{split}
E_\varepsilon (A,x) &=
\left\{y\in\mathbb{R}^n:\  |A^{-1}(y-x)|\leq \varepsilon  \right\}\\
&=\left\{y\in\mathbb{R}^n:\ \langle (AA^t)^{-1}(y-x),(y-x)\rangle\leq \varepsilon^2\right\}\\
&=\big\{x+Ay:\  y\in B_{\varepsilon}(0)\big\}.
\end{split}
\end{equation}
is an ellipsoid centered at $x$, with axes in the directions of
the  eigenvectors of $AA^t$, and axes lengths given by square roots of the corresponding eigenvalues, scaled by a factor $\varepsilon$.
\end{definition}

\begin{remark}\label{properties.elipsoids}
The following are some elementary but useful facts:
\begin{enumerate}

\item Whenever  $\varepsilon =1$ we will say that
the ellipsoid is unitary since then
\[
|E_1(A,x)|=\det( A)\,|B_1(0)|=|B_1(0)|.
\]

\item $E_\varepsilon (A,0)= \varepsilon \,E_1(A,0)$.

\item $|E_\varepsilon (A,x)|
=
\varepsilon^n |E_1(A,x)|
=
\varepsilon^n\,|B_1(0)|=|B_\varepsilon (0)|.$

\item The representation of a given ellipsoid by a matrix $A$ is not unique, since for any  $Q$ with $Q^tQ=I$, the matrix $QA$  represents the same ellipsoid. 

\item According to Remark \ref{remark.polar.decomposition}, we can always assume that an ellipsoid is given by a symmetric, positive definite matrix.

\end{enumerate}
\end{remark}

The following elementary fact will be used several times in the sequel. 
\begin{lemma}\label{lemma.trace.integral}
Let $M$ be a square matrix of dimension $n$. Then,
\begin{align}
{\rm trace}(M)
&=
\frac{n}{\varepsilon^2}
\dashint_{\partial B_\varepsilon (0)}
\langle My,y\rangle
\,d\mathcal{H}^{n-1}(y),
\label{trace.surface.integral.representation}
\\
&=
\frac{n+2}{\varepsilon^2}
\dashint_{B_\varepsilon (0)}
\langle My,y\rangle
\,dy,
\label{trace.solid.integral.representation}
\end{align}
\end{lemma}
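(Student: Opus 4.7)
The plan is to prove the surface identity \eqref{trace.surface.integral.representation} first by a symmetry argument, and then deduce the solid identity \eqref{trace.solid.integral.representation} from it via polar coordinates.

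For the surface formula, I would expand $\langle My, y\rangle = \sum_{i,j} M_{ij} y_i y_j$ and focus on computing the moments
\[
I_{ij}(\varepsilon) = \dashint_{\partial B_\varepsilon(0)} y_i y_j \, d\mathcal{H}^{n-1}(y).
\]
A reflection argument ($y \mapsto y - 2 y_j e_j$) shows $I_{ij} = 0$ for $i \neq j$, and rotational invariance of surface measure on the sphere (permuting coordinates) gives $I_{ii} = I_{11}$ for every $i$. Summing,
\[
n\, I_{11}(\varepsilon) = \sum_{i=1}^n \dashint_{\partial B_\varepsilon(0)} y_i^2 \, d\mathcal{H}^{n-1}(y) = \dashint_{\partial B_\varepsilon(0)} |y|^2 \, d\mathcal{H}^{n-1}(y) = \varepsilon^2,
\]
using that $|y| = \varepsilon$ on $\partial B_\varepsilon(0)$. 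Hence $I_{ij}(\varepsilon) = \frac{\varepsilon^2}{n}\delta_{ij}$, and
\[
\dashint_{\partial B_\varepsilon(0)} \langle My, y\rangle \, d\mathcal{H}^{n-1}(y) = \sum_{i,j} M_{ij} \cdot \frac{\varepsilon^2}{n}\delta_{ij} = \frac{\varepsilon^2}{n}\,{\rm trace}(M),
\]
which is \eqref{trace.surface.integral.representation}.

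For the solid formula I would use coarea (polar coordinates) together with the surface formula just proved. Writing $|B_\varepsilon(0)| = \frac{|\partial B_1(0)|}{n}\varepsilon^n$ and $y = rz$ with $z \in \partial B_1(0)$, one has
\[
\int_{B_\varepsilon(0)} \langle My, y\rangle \, dy = \int_0^\varepsilon \int_{\partial B_r(0)} \langle My, y\rangle \, d\mathcal{H}^{n-1}(y)\, dr = \int_0^\varepsilon r^{n+1}\,\frac{|\partial B_1(0)|}{n}\,{\rm trace}(M)\, dr,
\]
where the inner integral was evaluated by applying \eqref{trace.surface.integral.representation} at radius $r$ (or, equivalently, scaling from the unit sphere). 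Computing the remaining integral yields $\frac{|\partial B_1(0)|}{n(n+2)}\varepsilon^{n+2}\,{\rm trace}(M)$, and dividing by $|B_\varepsilon(0)| = \frac{|\partial B_1(0)|}{n}\varepsilon^n$ gives exactly \eqref{trace.solid.integral.representation}.

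There is no real obstacle here; the only mildly delicate step is making the symmetry argument for $I_{ij}$ clean. Everything else is a direct computation using homogeneity of $\langle My, y\rangle$ in $y$ together with polar coordinates.
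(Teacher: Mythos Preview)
Your proof is correct. The derivation of the solid identity \eqref{trace.solid.integral.representation} from the surface identity via polar coordinates is exactly what the paper does. For the surface identity \eqref{trace.surface.integral.representation}, however, you take a different route: you compute the second moments $I_{ij}(\varepsilon)$ directly by a reflection/permutation symmetry argument, whereas the paper obtains \eqref{trace.surface.integral.representation} in one line from the divergence theorem, writing $\langle My,y\rangle=\varepsilon\langle My,\nu\rangle$ on $\partial B_\varepsilon(0)$ and using $\operatorname{div}(My)=\operatorname{trace}(M)$. Your approach is slightly longer but entirely elementary (no vector calculus needed), while the paper's is shorter and more structural; either is perfectly adequate here.
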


\begin{proof} (Included for the sake of completeness)
 Expression \eqref{trace.surface.integral.representation} follows immediately from 
the divergence theorem, just realizing that  the exterior unit normal to $\partial B_\varepsilon (0)$ at  $y\in \partial B_\varepsilon(0)$ is $y/\varepsilon$. For the second part, a change to polar coordinates and \eqref{trace.surface.integral.representation} (with $\varepsilon=1$) yield
\[
\begin{split}
\dashint_{B_{\varepsilon}(0)}
\langle
My,y
\rangle
\,dy
&=
\frac{1}{|B_{\varepsilon}(0)|}
\int_{0}^{\varepsilon}\int_{\partial B_{1}(0)}
\langle
My,y
\rangle
\,d\mathcal{H}^{n-1}(y)\,
r^{n+1}
\,dr
\\
&=
\frac{\varepsilon^{2}}{(n+2)|B_{1}(0)|}
\int_{\partial B_{1}(0)}
\langle
My,y
\rangle
\,d\mathcal{H}^{n-1}(y)
=
\frac{\varepsilon^{2}}{n+2}\,
{\rm trace}(M).\qedhere
\end{split}
\]
\end{proof}

We conclude this section with the following linear algebra fact, which is the key to what follows. 

\begin{lemma}\label{caract.determ}
Let $B$ be a symmetric and positive semidefinite matrix. Then,
\[
\inf_{\det A=1}{\rm trace}(A^tBA)=n (\det(B))^{1/n}.
\]
On the other hand, if $B$ has a negative eigenvalue, then the  infimum is $-\infty$.
\end{lemma}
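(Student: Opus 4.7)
The plan is to reduce to a diagonal version of $B$, apply the arithmetic–geometric mean inequality to the eigenvalues of $A^t B A$, and then exhibit explicit minimizing (or unbounded) sequences in each case.

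First, I would diagonalize: write $B = U^t D U$ with $U$ orthogonal and $D=\mathrm{diag}(\lambda_1,\dots,\lambda_n)$. Substituting $\tilde A=UA$ (which still has determinant one) gives ${\rm trace}(A^t B A)={\rm trace}(\tilde A^t D \tilde A)$, so we may assume $B=D$ from the outset. When $D\ge 0$, the matrix $M:=A^t D A$ is symmetric and positive semidefinite (since $\langle Mx,x\rangle=\langle D\,Ax,Ax\rangle\ge 0$), and $\det M = \det(A)^2\det D =\det D$. Since $M$ has non-negative eigenvalues $\mu_1,\dots,\mu_n$, the AM-GM inequality gives
\[
{\rm trace}(A^t D A)=\sum_i \mu_i\ge n\Big(\prod_i \mu_i\Big)^{1/n}=n(\det D)^{1/n}=n(\det B)^{1/n},
\]
which proves the lower bound.

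Second, I would show this lower bound is attained (or at least approached) by an explicit choice. If $B>0$, take
\[
A^\ast=(\det B)^{1/(2n)}\,B^{-1/2},
\]
which satisfies $\det A^\ast = (\det B)^{1/2}(\det B)^{-1/2}=1$ and $(A^\ast)^t B A^\ast = (\det B)^{1/n} I$, whose trace equals $n(\det B)^{1/n}$, matching the lower bound. If instead $B\ge 0$ has some zero eigenvalues, say $\lambda_1,\dots,\lambda_k>0$ and $\lambda_{k+1}=\cdots=\lambda_n=0$, then $n(\det B)^{1/n}=0$ and I can take the diagonal matrix
\[
A_\delta=\mathrm{diag}(\delta,\dots,\delta,\delta^{-k/(n-k)},\dots,\delta^{-k/(n-k)}),
\]
which has determinant one, and for which ${\rm trace}(A_\delta^t D A_\delta)=\delta^2\sum_{i=1}^k\lambda_i\to 0$ as $\delta\to 0^+$, confirming that the infimum is $0$.

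Finally, for the case of an indefinite $B$ with a negative eigenvalue $\lambda_{i_0}<0$, I would exhibit a sequence making the trace diverge to $-\infty$. Working again in the diagonalizing basis, set $A_t$ diagonal with the $i_0$-entry equal to $t$ and all other diagonal entries equal to $t^{-1/(n-1)}$, so $\det A_t=1$. Then
\[
{\rm trace}(A_t^t D A_t)=t^2\lambda_{i_0}+t^{-2/(n-1)}\sum_{i\ne i_0}\lambda_i\longrightarrow -\infty\quad\text{as }t\to\infty,
\]
since the first term dominates and is negative. The main subtlety, which is the only nonroutine part, is treating the degenerate semidefinite case carefully: the infimum $0=n(\det B)^{1/n}$ need not be attained, but the above rescaled diagonal family shows it is still the correct value and consistent with the degeneracy of the Monge–Amp\`ere operator discussed before the lemma.
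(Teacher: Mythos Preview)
Your proof is correct and follows essentially the same approach as the paper's: the AM--GM inequality applied to the eigenvalues of $A^tBA$ gives the lower bound, and explicit diagonal (or nearly diagonal) matrices realize or approach it in each case. Your handling of the indefinite case---scaling up a single negative eigendirection rather than grouping eigenvalues by sign as the paper does---is a slight cosmetic variation but otherwise the arguments coincide.
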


\begin{proof}
Let  $A$ with $\det A=1$, which we can assume symmetric and positive definite by Remark~\ref{remark.polar.decomposition}. The matrix $A^tBA$ is symmetric and positive semidefinite and the arithmetic--geometric inequality gives,
\[
\det(B)^{1/n}
=
\Big(\prod_{i=1}^{n} \lambda_i(A^tBA)\Big)^{1/n}
\leq
\frac{1}{n}\sum_{i=1}^{n} \lambda_i(A^tBA)
=
\frac{{\rm trace}(A^tBA)}{n}.
\]
As this is true for any $A$ with $\det A=1$, we deduce,
\[
n (\det(B))^{1/n}\leq\inf_{\det A=1}{\rm trace}(A^tBA).
\]

To derive the converse inequality, assume first that $B>0$. 
Then, the matrix $A=\det(B)^\frac{1}{2n}B^{-\frac{1}{2}}$ has determinant equal to 1 and verifies the equality,
$${\rm trace}(A^tBA)=n(\det(B))^{1/n}.$$

Let us now consider the case when $B\geq0$. Since the result is trivial when $B=0$, we can assume that 0 is an eigenvalue of the matrix  $B$ with multiplicity $n-k<n$, that is,
\[
B=P_B
\,{\rm diag}
\big(
\lambda_1(B),\ldots,\lambda_k(B),0,\ldots,0
\big)\,
P_B^t
\]
with $P_BP_B^t=I$. Fix $\delta>0$ and  define 
\begin{equation}\label{A_epsilon}
A_{\delta}=P_B\,{\rm diag}(\lambda_i(A_{\delta}))\,P_B^t
\end{equation}
with $P_B$ the same  as before and ${\rm diag}(\lambda_i(A_{\delta}))$ the diagonal matrix with entries
\[
\lambda_i(A_{\delta})=
\left\{\begin{split}
&\left(\frac{\delta}{{\rm trace}(B)}\right)^\frac12\hspace{38pt}{\rm for}\ i=1,\ldots ,k\\
&\left(\frac{\delta}{{\rm trace}(B)}\right)^{- \frac{k}{2(n-k)}}\quad{\rm for}\ i=k+1,\ldots ,n.
\end{split}
\right.
\]
In this way, $A_{\delta}$ is  positive definite with  $\det (A_{\delta})=1$, and
\[
{\rm trace}( A_{\delta}^tBA_{\delta})=\sum_{i=1}^k \lambda_i(A_{\delta})^2\lambda_i(B)=\delta.
\]
Since $\delta>0$ is arbitrary, we conclude that
\[
\inf_{\det A=1}{\rm trace}(A^tBA)=0=n(\det(B))^{1/n}.
\]

Finally, whenever $B$ has negative eigenvalues, one can adapt the above argument and write
\[
B=P_B
\,{\rm diag}
\big(
\lambda_1(B),\ldots,\lambda_k(B),\lambda_{k+1}^-(B),\ldots,\lambda_n^-(B)
\big)\,
P_B^t
\]
where $\lambda_i^-(B)$ denote the negative eigenvalues of $B$ (note that we are assuming that $B$ has $k$ non-negative eigenvalues). Then, choosing $A_\delta$ as in \eqref{A_epsilon} with
\[
\lambda_i(A_{\delta})=
\left\{\begin{split}
&\delta^\frac{1}{2}\hspace{38pt}{\rm for}\ i=1,\ldots ,k\\
&\delta^{- \frac{k}{2(n-k)}}\quad{\rm for}\ i=k+1,\ldots ,n.
\end{split}
\right.
\]
we get
\[
{\rm trace}( A_{\delta}^tBA_{\delta})
=
\delta \sum_{i=1}^k \lambda_i(B)
+
\delta^{- \frac{k}{n-k}} \sum_{i=k}^{n}\lambda_i^-(B).
\]
Taking $\delta\to0$ in the above expression, we obtain that $$\inf_{\det A=1} \Big\{{\rm trace}(A^tBA) \Big\}=-\infty.$$
\end{proof}

\section{Proof of the main results: Theorems \ref{theorem.C2.case},   \ref{thm.MVP.visco}, \ref{theorem.C2.case2}, and
 \ref{thm.MVP.visco2}}
 \label{section.viscosity.case}

In this section, we first prove  the mean value property under the restriction $A\le \phi(\varepsilon) I$ for solutions of the Monge-Amp\`ere equation, in the classical  and  viscosity cases, Theorems \ref{theorem.C2.case} and \ref{thm.MVP.visco}, respectively.
We start by showing that for strictly convex functions with positive definite Hessian  the matrices that compete for the infimum at a given point cannot be too degenerate. 

 \begin{lemma}\label{lemma.uniform.ellipticity.visco}
 Let $ u\in C^2(\Omega)$ with $\lambda_{\min} (D^2u) >0$. Then, for every $x\in\Omega$ and every
 \begin{equation}  \label{lemma.uniform.ellipticity.visco.theta0}
\theta> \theta_0:=\left(\frac{\Delta u(x)}{\lambda_{\min}\big(D^2 u(x)\big)}\right)^{1/2},
 \end{equation}
 we have
 \begin{equation}  \label{lemma.uniform.ellipticity.visco.statement}
 \inf_{\det A=1}{\rm trace}\big(A^tD^2 u(x)A\big)
 =
  \inf_{\det A=1,
 \ A\leq \theta I} {\rm trace}(A^tD^2 u(x)A) .
 \end{equation}
 \end{lemma}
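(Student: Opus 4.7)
The plan is to exhibit the explicit minimizer for the unconstrained infimum in \eqref{lemma.uniform.ellipticity.visco.statement} and verify that it satisfies the constraint $A\leq \theta I$ whenever $\theta>\theta_0$.

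Set $B:=D^2u(x)$, which is symmetric positive definite by hypothesis. From Lemma~\ref{caract.determ} (specifically the case $B>0$ treated in its proof) the unrestricted infimum equals $n(\det B)^{1/n}$ and is attained at
\[
A^{*}:=(\det B)^{\frac{1}{2n}}\,B^{-\frac{1}{2}},
\]
which is symmetric positive definite and satisfies $\det A^{*}=1$. By Remark~\ref{remark.polar.decomposition} we may restrict the competition on both sides of \eqref{lemma.uniform.ellipticity.visco.statement} to symmetric positive definite matrices, so it is enough to show that $A^{*}$ is admissible in the restricted infimum, i.e.\ $A^{*}\leq \theta I$.

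The key step is to control $\lambda_{\max}(A^{*})$. Diagonalizing $B$ in an orthonormal basis, the eigenvalues of $A^{*}$ are $(\det B)^{1/(2n)}\,\lambda_i(B)^{-1/2}$, so
\[
\lambda_{\max}(A^{*})=\frac{(\det B)^{1/(2n)}}{\lambda_{\min}(B)^{1/2}}.
\]
Applying the arithmetic--geometric mean inequality to the eigenvalues of $B$ gives $(\det B)^{1/n}\leq {\rm trace}(B)/n = \Delta u(x)/n$, hence
\[
\lambda_{\max}(A^{*})\leq \left(\frac{\Delta u(x)}{n\,\lambda_{\min}(B)}\right)^{1/2}\leq \theta_{0}<\theta,
\]
so $A^{*}\leq \theta I$ and $A^{*}$ belongs to the restricted feasible set.

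Finally, the restricted infimum is bounded below by the unrestricted one (the former has a smaller feasible set) and bounded above by ${\rm trace}((A^{*})^{t}BA^{*})=n(\det B)^{1/n}$, which is precisely the value of the unrestricted infimum. Therefore the two infima coincide, proving \eqref{lemma.uniform.ellipticity.visco.statement}. The only nontrivial ingredient is the AM--GM estimate that certifies admissibility of $A^{*}$; once that bound is in place the rest is a routine sandwich argument.
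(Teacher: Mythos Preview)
Your proof is correct and takes a genuinely different route from the paper's argument. The paper never invokes the explicit minimizer $A^{*}$; instead it shows directly that any competitor with $\lambda_{\max}(A)>\theta$ satisfies
\[
\mathrm{trace}\big(A^{t}D^{2}u(x)A\big)\ \ge\ \lambda_{\min}\big(D^{2}u(x)\big)\,\lambda_{\max}^{2}(A)\ >\ \lambda_{\min}\big(D^{2}u(x)\big)\,\theta_{0}^{2}\ =\ \Delta u(x),
\]
and then compares with the upper bound $\Delta u(x)$ obtained by testing $A=I$. This rules out large matrices without identifying where the infimum is attained. Your approach, by contrast, uses that Lemma~\ref{caract.determ} already gives the minimizer $A^{*}=(\det B)^{1/(2n)}B^{-1/2}$ and checks via AM--GM that $\lambda_{\max}(A^{*})\le\theta_{0}/\sqrt{n}\le\theta_{0}<\theta$, so $A^{*}$ lies in the restricted class; the sandwich argument then finishes. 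Your route is shorter and more constructive (and is in fact the idea behind Remark~\ref{remark18} in the paper), while the paper's argument yields the slightly stronger information that \emph{every} matrix outside the constraint has trace value strictly above the unrestricted infimum, a quantitative exclusion that can be useful when one needs uniform control over near-minimizers rather than just the minimizer itself.
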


 \begin{proof}
Let $A$ be a symmetric matrix 
with $\det(A)=1$, and write $A=QJQ^t$ with $Q$ orthogonal. Then, we have
\[
 \begin{split}
 \textrm{trace}(A^tD^2 u(x)A)&=\sum_{i=1}^{n}\lambda_i^2(A)(Q^tD^2 u(x)Q)_{ii}\\
 &\geq \lambda_{\min}\big(D^2 u(x)\big)\sum_{i=1}^{n}\lambda_i^2(A) \\
 & \geq \lambda_{\min}\big(D^2 u(x)\big)\,
 \lambda_{\max}^{2}(A).
 \end{split}
\]
Therefore, we conclude that
 \begin{equation}\label{estimate1.main.proof}
  \mathop{\mathop{\inf}_{\det A=1,}}_{\lambda_{\max}(A)>\theta}
 \textrm{trace}(A^tD^2 u(x)A) \geq
 \lambda_{\min}\big(D^2 u(x)\big)\,
\theta^2.
 \end{equation}
 Moreover, by
  choosing $A=I$,  we have
 \begin{equation}\label{estimate2.main.proof}
 \mathop{\mathop{\inf}_{\det A=1}}\textrm{trace}(A^tD^2 u(x)A)
 \leq \Delta  u(x).
 \end{equation}
From \eqref{estimate1.main.proof} and \eqref{estimate2.main.proof}, whenever
 $\theta
 >
 \theta_0
 $ for $\theta_0$
 given by \eqref{lemma.uniform.ellipticity.visco.theta0}, we have that
 \[
  \mathop{\mathop{\inf}_{\det A=1,}}_{\lambda_{\max}(A)>\theta }
 \textrm{trace}(A^tD^2 u(x)A)
\\
 >
 \inf_{\det A=1}
 \textrm{trace}(A^tD^2 u(x)A).
 \]
 This proves the lemma, since
\[
 \inf_{\det A=1}
 \textrm{trace}(A^tD^2 u(x)A)
 =
 \min\Bigg\{
 \mathop{\mathop{\inf}_{\det A=1,}}_{\lambda_{\max}(A)> \theta} \textrm{trace}(A^tD^2 u(x)A),
 \mathop{\mathop{\inf}_{\det A=1,}}_{
 \ \lambda_{\max}(A)\leq \theta} \textrm{trace}(A^tD^2 u(x)A)\Bigg\}.\qedhere
\]
 \end{proof}

We can proceed with the proof of  Theorem \ref{theorem.C2.case}.

\begin{proof}[Proof of Theorem \ref{theorem.C2.case}]
Given $x\in\Omega$, let us define the paraboloid 
\[
P(z)=u(x)+\langle\nabla u(x),z-x\rangle+\frac12\langle D^2u(x)(z-x),(z-x)\rangle.
\]
 Since $u\in C^2(\Omega)$, we have
\[
u(z)-P(z)=o(|z-x|^2)\qquad\textrm{as}\ z\to x,
\]
which means that for every $\eta >0$, there exists $\delta>0$ such that for every $z\in B_\delta(x),$
\begin{equation} \label{u-P.o.pequena}
P(z)-\frac{\eta}{2}|z-x|^2\leq u(z)\leq P(z)+\frac{\eta}{2}|z-x|^2,
\end{equation}
 with equality only when $z=x$. For convenience, let us denote
\[
P_\eta^\pm(z)=P(z)\pm\frac{\eta}{2}|z-x|^2.
\]
Let us assume first that $D^2u(x)>0$.
Then,
\begin{equation}\label{proof.C2.case.aux.for.remark.in.intro}
\begin{split}
\dashint_{B_{\varepsilon}(0)}
&
\left(
P_\eta^\pm(x+Ay)-P_\eta^\pm(x)
\right)
\,dy
=
\frac12\dashint_{B_{\varepsilon}(0)}
\left(
\big\langle A^tD^2u(x)Ay,y\big\rangle\pm\eta |Ay|^2
\right)
\,dy
\\
&=
\frac12
\dashint_{B_{\varepsilon}(0)}
\left\langle A^t\left(D^2u(x)\pm\eta I\right)Ay,y\right\rangle
\,dy
=
\frac{\varepsilon^2}{2(n+2)}\,\textrm{trace}\left(A^t\left(D^2u(x)\pm\eta I\right)A\right),
\end{split}
\end{equation}
by Lemma \ref{lemma.trace.integral}.
From here, we can use Lemmas \ref{lemma.uniform.ellipticity.visco} and \ref{caract.determ} to get
\begin{equation}\label{C2.case.first.part}
\begin{split}
\mathop{\mathop{\inf}_{\det A=1,}}_{
A\leq \phi(\varepsilon)I} \bigg\{
\dashint_{B_{\varepsilon}(0)}&
\left(
P_\eta^\pm(x+Ay)-P_\eta^\pm(x)
\right)
\,dy
\bigg\}
\\
&=
\frac{\varepsilon^2}{2(n+2)}\,
\mathop{\mathop{\inf}_{\det A=1,}}_{
A\leq \phi(\varepsilon)I}\bigg\{
\textrm{trace}\left(A^t\left(D^2u(x)\pm\eta I\right)A\right)
\bigg\}
\\
&=
\frac{\varepsilon^2}{2(n+2)}\,
\mathop{\mathop{\inf}_{\det A=1}}
\textrm{trace}\left(A^t\left(D^2u(x)\pm\eta I\right)A\right)
\\
&=
\frac{n}{2(n+2)}\,
\left(\det\left(D^2u(x)\pm\eta I\right)\right)^{1/n}
\varepsilon^2
\end{split}
\end{equation}
as long as
\[
\varepsilon <\frac{\lambda_{\min}(D^2u(x))-\eta}{\Delta u(x)+ n\eta},
\]
which holds for $\eta, \varepsilon$ small enough because $D^2u(x)>0$.

On the other hand, from \eqref{hipotesis.phi} it follows that  the conditions  $A\leq \phi( \varepsilon)I$  and $|y|\leq \varepsilon$ imply
 $x+Ay\in B_\delta(x)$  for $\varepsilon< \varepsilon_0$, where $\varepsilon\, \phi(\varepsilon)< \delta$ for every $\varepsilon<\varepsilon_0$ and $\delta$ is as 
 in \eqref{u-P.o.pequena}. Therefore, by \eqref{u-P.o.pequena}, if $\varepsilon<\varepsilon_0$, then
 \begin{equation}\label{C2.case.second.part}
P_\eta^-(x+Ay)
\leq
u(x+Ay)
\leq
P_\eta^+(x+Ay)\qquad\textrm{for every}\ y\in B_\varepsilon (0).
\end{equation}
Then, 
\eqref{C2.case.first.part} and \eqref{C2.case.second.part} give
\begin{equation}\label{C2.case.final.loop.first.part}
\begin{split}
\frac{n}{2(n+2)}\,&
\left(\det\left(D^2u(x)-\eta I\right)\right)^{1/n}
\varepsilon^2
\\
&\leq
\mathop{\mathop{\inf}_{\det A=1,}}_{
A\leq \phi(\varepsilon)I}\bigg\{
\dashint_{B_{\varepsilon}(0)}
\left(
u(x+Ay)-u(x)
\right)
\,dy
\bigg\}
\\
&\leq
\frac{n}{2(n+2)}\,
\left(\det\left(D^2u(x)+\eta I\right)\right)^{1/n}
\varepsilon^2.
\end{split}
\end{equation}

To conclude, we observe that
\begin{equation}\label{C2.case.determinant.estimates}
\left(\det\left(D^2u(x)\pm\eta I\right)\right)^{1/n}
=
\left(\det{D^2u(x)}\right)^{1/n}
+O(\eta)
\qquad
\textrm{as $\eta\to0$}.
\end{equation}
To see this, first notice that
\begin{equation}\label{det.expansion.formula}
\det\left(D^2u(x)\pm\eta I\right)
=
\det{D^2u(x)}
+
\sum_{k=1}^{n}(\pm \eta)^{k}\sigma_{n-k}\left(D^2u(x)\right),
\end{equation}
where the coefficients in the expansion are given by the 
elementary symmetric polynomials on the eigenvalues of $D^2u(x)$, which are positive by the convexity of $u$.
Therefore, 
\[
\left(
\det{D^2u(x)}-C\eta
\right)^{1/n}
\leq
\left(
\det\left(D^2u(x)\pm\eta I\right)
\right)^{1/n}
\leq
\left(
\det{D^2u(x)} + C\eta
\right)^{1/n}
\]
for some $C>0$.
Then, the Mean Value Theorem applied to $g(t)=t^{1/n}$, with $a=\det{D^2u(x)}$ and $b=\det{D^2u(x)}+C\eta$ gives that there 
exists $\xi\in(a,b)$ such that
\[
\left(
\det{D^2u(x)} + C\eta
\right)^{1/n}=
g(b)
=g(a)+g'(\xi)(b-a)
\leq
\left(
\det{D^2u(x)}
\right)^{1/n}
+C\eta.
\]
Similarly, we obtain that
\[
\left(
\det{D^2u(x)} - C\eta
\right)^{1/n}
\geq
\left(
\det{D^2u(x)}
\right)^{1/n}
-C\eta
\]
and \eqref{C2.case.determinant.estimates} follows.

Putting together \eqref{C2.case.final.loop.first.part} and \eqref{C2.case.determinant.estimates}
we have proved that for every $\eta>0$ there exists $\varepsilon_0$ such that for every $\varepsilon<\varepsilon_0$
\[
\begin{split}
\Bigg|
\mathop{\mathop{\inf}_{\det A=1,}}_{
A\leq \phi(\varepsilon)I} \bigg\{
\dashint_{B_{\varepsilon}(0)}
\left(
u(x+Ay)-u(x)
\right)
\,dy
\bigg\}
-
\frac{n}{2(n+2)}\,
\left(\det{D^2u(x)}\right)^{1/n}
\varepsilon^2
\Bigg|
\leq C\eta
\varepsilon^2,
\end{split}
\]
that is, \eqref{absolute.goal.C2} holds.

The case when $\lambda= 0$ is an eigenvalue of $D^2u(x)$  requires minor modifications of the above argument 
(we will assume $D^2u(x)\neq0$ because the result is trivial otherwise). In fact, the convexity of $u$ 
yields $u(x+Ay)\geq u(x)+\langle\nabla u(x),Ay\rangle$ and therefore,
\[
\begin{split}
\mathop{\mathop{\inf}_{\det A=1,}}_{
A\leq \phi(\varepsilon)I} \bigg\{
\dashint_{B_{\varepsilon}(0)}
\left(
u(x+Ay)-u(x)
\right)
\,dy
\bigg\}
\geq0
=
\frac{n}{2(n+2)}\,
\left(\det{D^2u(x)}\right)^{1/n}
\varepsilon^2,
\end{split}
\]
since the integral of the first-order term vanishes by symmetry. Thus, proving \eqref{absolute.goal.C2} amounts to show that
\begin{equation}\label{eignvalues.zero.leq}
\mathop{\mathop{\inf}_{\det A=1,}}_{
A\leq \phi(\varepsilon)I} \bigg\{
\dashint_{B_{\varepsilon}(0)}
\left(
u(x+Ay)-u(x)
\right)
\,dy
\bigg\}
\leq
o(\varepsilon^2),
\end{equation}
 as $\varepsilon \to0$. Fix $\eta>0$.  Arguing as before, we find that there exists $\varepsilon_0$ 
 such that  \eqref{C2.case.second.part} holds for every $\varepsilon<\varepsilon_0$ (we will only use the upper estimate). 
 Then, by  \eqref{C2.case.second.part},  and Lemmas  \ref{lemma.trace.integral}, \ref{lemma.uniform.ellipticity.visco} and \ref{caract.determ}
 we get
\[
\begin{split}
\mathop{\mathop{\inf}_{\det A=1,}}_{
A\leq \phi(\varepsilon)I} \bigg\{
\dashint_{B_{\varepsilon}(0)}&
\left(
u(x+Ay)-u(x)
\right)
\,dy
\bigg\}
\\
&\leq
\mathop{\mathop{\inf}_{\det A=1,}}_{
A\leq \phi(\varepsilon) I}\bigg\{
\dashint_{B_{\varepsilon}(0)}
\left(
P_\eta^+(x+Ay)-P_\eta^+(x)
\right)
\,dy
\bigg\}
\\
&=
\frac{\varepsilon^2}{2(n+2)}\,
\mathop{\mathop{\inf}_{ \det A=1,}}_{
A\leq \phi(\varepsilon) I} \bigg\{
\textrm{trace}\left(A^t\left(D^2u(x)+\eta I\right)A\right)
\bigg\}
\\
&=
\frac{\varepsilon^2}{2(n+2)}\,
\inf_{\det A=1}
\textrm{trace}\left(A^t\left(D^2u(x)+\eta I\right)A\right)
\\
&=
\frac{n}{2(n+2)}\,
\left(\det\left(D^2u(x)+\eta I\right)\right)^{1/n}
\varepsilon^2,
\end{split}
\]
as long as
\[
\varepsilon<\frac{\eta}{\Delta u(x)+ n\eta}.
\]
But them, \eqref{det.expansion.formula} gives
\[
0\leq\det\left(D^2u(x)+\eta I\right)
\leq
\det{D^2u(x)}+C\eta=C\eta
\]
and the proof of \eqref{eignvalues.zero.leq} is complete.
\end{proof}

Next,  we proceed with the proof of Theorem \ref{thm.MVP.visco}.

\begin{proof}[Proof of Theorem \ref{thm.MVP.visco}]
Let us show first that  $u$ is a viscosity subsolution of \eqref{MVP.solid.M-A.visco.88} 
if and only if \eqref{MVP.solid.visco} holds
in the viscosity sense.  Assume that a convex paraboloid  $P$ touches $u$ from above at $x_0\in\Omega$. By definition of viscosity subsolution of \eqref{MVP.solid.M-A.visco.88}, we have
\begin{equation}\label{proof.visco.sub.0}
\det D^2 P(x_0)\geq f(x_0)
\end{equation}
and we want to show that
\begin{equation}\label{proof.visco.sub.1}
\begin{split}
u(x_0)
\leq
\mathop{\mathop{\inf}_{\det A=1,}}_{
A\leq\phi( \varepsilon) I} \bigg\{
\dashint_{B_{\varepsilon}(0)}
 P(x_0+Ay)\,dy
\bigg\}
-\frac{n}{2(n+2)}\,(f(x_0))^{1/n}\,\varepsilon^2
+o(\varepsilon^2)\qquad \textrm{as $\varepsilon\to0$}.
\end{split}
\end{equation}
But, in fact, Theorem \ref{theorem.C2.case} and \eqref{proof.visco.sub.0} show that
\begin{equation}\label{proof.visco.sub.2}
\begin{split}
\mathop{\mathop{\inf}_{\det A=1,}}_{
A\leq \phi(\varepsilon) I} \bigg\{
\dashint_{B_{\varepsilon}(0)}
\left(
P(x_0+Ay)-P(x_0)
\right)
\,dy
\bigg\}
& =
\frac{n}{2(n+2)}\,
\left(\det{D^2P(x_0)}\right)^{1/n}
\varepsilon^2
+
o(\varepsilon^2),
\\
&\geq
\frac{n}{2(n+2)}\,
(f(x_0))^{1/n}
\varepsilon^2
+
o(\varepsilon^2),
\end{split}
\end{equation}
as $\varepsilon\to0$,
which gives \eqref{proof.visco.sub.1}.

To prove the converse, let $P$ and $x_0$ be as before, and assume \eqref{proof.visco.sub.1}. Our goal is to show \eqref{proof.visco.sub.0}. Theorem \ref{theorem.C2.case}  and equation \eqref{proof.visco.sub.1} show that
\[
\begin{split}
\frac{n}{2(n+2)}\,(f(x_0))^{1/n}\, \varepsilon^2 +
o(\varepsilon^2)
&\leq
\mathop{\mathop{\inf}_{\det A=1,}}_{
A\leq \phi(\varepsilon)} \bigg\{
\dashint_{B_{\varepsilon}(0)}
\left(
 P(x_0+Ay)- P(x_0)
\right)
\,dy
\bigg\}
\\
&=
\frac{n}{2(n+2)}\,
\left(\det{D^2P(x_0)}\right)^{1/n}
\varepsilon^2
+
o(\varepsilon^2),
\end{split}
\]
as $\varepsilon \to0$. Then, dividing both sides by $\varepsilon^2$ and letting $\varepsilon\to0$ gives \eqref{proof.visco.sub.0}.

The supersolution case follows similarly.
\end{proof}

 Theorem \ref{theorem.C2.case2} follows from Theorem \ref{theorem.C2.case}. The key point in the argument is Lemma \ref{keylemma} below, which shows that when $\lambda_{\min}(D^2 u)>0$
 the  ellipsoids that compete for the infimum among ellipsoids contained in $\Omega$ must concentrate around the point where the mean value property is centered.
 Theorem \ref{thm.MVP.visco2}  in the viscosity case follows from 
 Theorem \ref{theorem.C2.case2} as Theorem \ref{thm.MVP.visco} follows from Theorem  \ref{theorem.C2.case}.

\begin{lemma} \label{keylemma}
Let $u\in C^2 (\Omega)$ be a convex function.
We fix $x_0\in\Omega$ and assume that $\lambda_{\min}(D^2 u(x_0))>0$.
For each $\varepsilon$, let $A_\varepsilon$ be a matrix with $\det A_\varepsilon=1$ that almost realizes the infimum; that is, such that
 \begin{equation}\label{absolute.goal.C88}
 \begin{array}{l}
 \displaystyle 
\dashint_{B_{\varepsilon}(0)}
\left(
u(x_0+A_\varepsilon y) - u(x_0) -
\frac{n}{2(n+2)}\,
\left(\det{D^2u(x_0)}\right)^{1/n}
\varepsilon^2
\right)\,dy  \\[10pt]
\quad  \displaystyle \leq \mathop{\mathop{\inf}_{\det A=1}}_
{E_{\varepsilon} (A,x_0) \subset\Omega}\bigg\{
\dashint_{B_{\varepsilon}(0)}
\left(
u(x_0+Ay) - u(x_0) -
\frac{n}{2(n+2)}\,
\left(\det{D^2u(x_0)}\right)^{1/n}
\varepsilon^2
\right)
\,dy
\bigg\} + o(\varepsilon^2).
\end{array}
\end{equation} 
Then, the ellipsoids 
$
E_{\varepsilon}(A_\varepsilon, x_0)
$
concentrate at the point $x_0$ as $\varepsilon \to 0$, in the sense that,
given $\delta>0$ there exists $\varepsilon_0>0$ such that
$$
E_{\varepsilon}(A_\varepsilon, x_0) \subset B_\delta (x_0)
$$
for each $\varepsilon<\varepsilon_0$.
\end{lemma}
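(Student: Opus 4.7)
The plan is to argue by contradiction: assume concentration fails and derive an $\Omega(1)$ lower bound on $\dashint_{B_\varepsilon(0)}u(x_0+A_\varepsilon y)\,dy-u(x_0)$ that is incompatible with the $O(\varepsilon^2)$ upper bound provided by the near-optimality hypothesis. For the upper bound, Theorem~\ref{theorem.C2.case} at $x_0$ gives
\[
\mathop{\mathop{\inf}_{\det A=1}}_{A\le\phi(\varepsilon)I}\bigg\{\dashint_{B_\varepsilon(0)}u(x_0+Ay)\,dy\bigg\}=u(x_0)+\tfrac{n}{2(n+2)}\bigl(\det D^2u(x_0)\bigr)^{1/n}\varepsilon^2+o(\varepsilon^2),
\]
and since $\varepsilon\phi(\varepsilon)\to 0$, every competitor in this restricted infimum satisfies $E_\varepsilon(A,x_0)\subset\Omega$ for $\varepsilon$ small, and hence is admissible in the unrestricted one. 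Therefore the unrestricted infimum is at most $u(x_0)+O(\varepsilon^2)$, and the near-optimality \eqref{absolute.goal.C88} transfers this bound to $A_\varepsilon$, yielding $\dashint_{B_\varepsilon(0)}u(x_0+A_\varepsilon y)\,dy-u(x_0)=O(\varepsilon^2)$.

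To get a matching pointwise lower bound, I set $h(w):=u(x_0+w)-u(x_0)-\langle\nabla u(x_0),w\rangle$, which is convex and non-negative on $\Omega-x_0$. Choose $r>0$ so that $B_r(x_0)\subset\Omega$ and $D^2u\ge cI$ on $B_r(x_0)$ for $c:=\tfrac{1}{2}\lambda_{\min}(D^2u(x_0))>0$; Taylor with integral remainder then gives $h(w)\ge\tfrac{c}{2}|w|^2$ whenever $|w|\le r$. For $|w|>r$ with $x_0+w\in\Omega$, the convexity of $\Omega$ keeps the segment $[x_0,x_0+w]$ inside $\Omega$, and one-dimensional convexity of $h$ along the ray from $0$ yields $h(w)\ge(|w|/r)\,h(rw/|w|)\ge(cr/2)|w|$. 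Altogether $h(w)\ge\psi(|w|):=\min(\tfrac{c}{2}|w|^2,\tfrac{cr}{2}|w|)>0$ for every $w\ne 0$ with $x_0+w\in\Omega$.

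Now suppose concentration fails, so there exist $\delta>0$ and a sequence $\varepsilon_k\to 0$ with $E_{\varepsilon_k}(A_{\varepsilon_k},x_0)\not\subset B_\delta(x_0)$, which forces $\varepsilon_k\lambda_{\max}(A_{\varepsilon_k})\ge\delta$. Diagonalize $A_{\varepsilon_k}$ (Remark~\ref{remark.polar.decomposition}) with eigenvalues $\lambda_1^{(k)}=\lambda_{\max}(A_{\varepsilon_k})\ge\lambda_2^{(k)}\ge\cdots\ge\lambda_n^{(k)}$, let $w_1$ denote the coordinate along the $\lambda_{\max}$-eigenvector, and change variables $w=A_{\varepsilon_k}y$ (Jacobian $1$). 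Restrict the integration to the tip slice $S_k=\{w\in E_{\varepsilon_k}(A_{\varepsilon_k},0):|w_1|\in[\varepsilon_k\lambda_1^{(k)}/2,\,\varepsilon_k\lambda_1^{(k)}]\}$; a direct Fubini computation using the key identity $\prod_i\lambda_i^{(k)}=1$ gives $|S_k|=C_n\varepsilon_k^n$ for a purely dimensional constant $C_n>0$, while $|w|\ge\varepsilon_k\lambda_1^{(k)}/2\ge\delta/2$ throughout $S_k$. Since the affine term in $h$ integrates to $0$ by symmetry of $B_{\varepsilon_k}(0)$,
\[
\dashint_{B_{\varepsilon_k}(0)}u(x_0+A_{\varepsilon_k}y)\,dy-u(x_0)\ \ge\ \dashint_{B_{\varepsilon_k}(0)}h(A_{\varepsilon_k}y)\,dy\ \ge\ \frac{C_n\,\psi(\delta/2)}{|B_1(0)|}>0,
\]
independent of $k$, contradicting the $O(\varepsilon^2)$ upper bound and completing the proof. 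The main obstacle is this slice volume computation: the constraint $\det A_\varepsilon=1$ is precisely what makes the transverse semi-axes compensate exactly for the stretching in the $\lambda_{\max}$-direction, so that $|S_k|$ remains a fixed dimensional fraction of $|B_{\varepsilon_k}(0)|$ no matter how anisotropic the ellipsoid becomes.
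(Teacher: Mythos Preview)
Your proof is correct and follows essentially the same approach as the paper's: argue by contradiction, subtract the supporting affine part so that the integrand is non-negative, invoke Theorem~\ref{theorem.C2.case} to get an $O(\varepsilon^2)$ upper bound on the near-optimal average, and then obtain a uniform positive lower bound by restricting to the ``tip slice'' of the ellipsoid in the $\lambda_{\max}$-direction, whose measure is a fixed dimensional fraction of $|B_{\varepsilon}(0)|$ precisely because $\det A_\varepsilon=1$. The paper carries out the identical slice computation via Cavalieri's principle; your Fubini version is the same calculation. One cosmetic remark: you invoke ``convexity of $\Omega$'' to push the quadratic lower bound for $h$ out along rays, but in fact you only ever need $h(w)\ge\psi(|w|)$ for $w$ in the ellipsoid $E_{\varepsilon_k}(A_{\varepsilon_k},0)$, which is itself convex and contains the origin, so the segment $[0,w]$ lies in it (and hence $x_0+[0,w]\subset\Omega$) regardless of whether $\Omega$ is convex.
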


\begin{proof} 
First, we observe that since $u$ is convex we can subtract  the supporting hyperplane of $u$ at $x_0$ and
assume  $u(x_0)=0$ and $u\geq 0$ in $\Omega$.
Since the eigenvalues  of $D^2 u(x_0)$ are strictly positive we have that there exists $c>0$ such that
\begin{equation} \label{real.madrid}
u(x) \geq c \, \delta^2, \qquad \mbox{for } |x - x_0| > \delta/2, 
\end{equation}
where $\delta$ can be chosen small. 

We argue by contradiction. Assume that there is $\delta>0$ and a sequence $A_m=A_{\varepsilon_m}$ such that
$\det A_m=1$, $E_{\varepsilon_m}(A_m, x_0)\subset \Omega$, 
$
E_{\varepsilon_m}(A_m, x_0)\cap (\Omega \setminus B_{\delta}(x_0)) \neq \emptyset
$
(the ellipsoids do not concentrate),  and
 \begin{equation}\label{absolute.goal.C88.99}
 \begin{array}{l}
 \displaystyle 
\dashint_{B_{\varepsilon_m}(0)}
\left(
u(x_0+A_my)  -
\frac{n}{2(n+2)}\,
\left(\det{D^2u(x_0)}\right)^{1/n}
\varepsilon_m^2
\right)
\,dy \\[10pt]
\quad  \displaystyle \leq  \mathop{\mathop{\inf}_{\det A=1}}_
{E_{\varepsilon_m} (A,x_0) \subset\Omega}\bigg\{
\dashint_{B_{\varepsilon_m }(0)}
\left(
u(x_0+Ay)  -
\frac{n}{2(n+2)}\,
\left(\det{D^2u(x_0)}\right)^{1/n}
\varepsilon_m^2
\right)
\,dy
\bigg\}+ o(\varepsilon_m^2 ).
\end{array}
\end{equation} 
Since
$u(x) \geq c \,\delta^2$  for  $|x - x_0| > \delta/2$
it holds that 
$$
\dashint_{B_{\varepsilon_m}(0)}
u(x_0+A_my) dy \geq \frac{C}{\varepsilon_m^n} \int_{\{y\in B_{\varepsilon_m}(0) \colon |A_m y| \geq
 \delta/2\}}
u(x_0+A_m y) dy \geq  \frac{C \delta^2}{\varepsilon_m^n}\,  \big|\big\{y\in B_{\varepsilon_m}(0) \colon |A_m y| \geq
 \delta/2\big\} \big|
$$
Next, since $
E_{\varepsilon_m}(A_m, x_0)\cap (\Omega \setminus B_{\delta}(x_0)) \neq \emptyset,
$
the largest eigenvalue of $A_m$ must satisfy
$$
\frac{\delta}{\varepsilon_m} \leq \lambda_{\max}=\lambda_{\max} (A_m).
$$
We assume that the eigenvector corresponding to $\lambda_{\max} (A_m)$ is in the direction of $x_n$, and we have
\[
\begin{split}
 \Big|\left\{ y\in B_{\varepsilon_m}(0) \colon |A_m y| \geq
 \delta/2\right\} \Big|
 &= \varepsilon^n_m \left| \left\{ x\in B_{1}(0) \colon  |A_m x| \geq
 \frac{\delta}{2\varepsilon_m} \right\} \right|\\
 &\geq 
 \varepsilon^n_m \left| \left\{ x\in B_{1}(0) \colon  (A_m x)_n \geq
 \frac{\delta}{2\varepsilon_m} \right\} \right|\\
 &\geq 
 \varepsilon^n_m \left| \left\{ x\in B_{1}(0) \colon  (A_m x)_n \geq
 \frac{\lambda_{\max}}{2} \right\} \right|,
\end{split}
\]
where $(A_m x)_n$ denotes the $n$-th coordinate of $A_m x$.
By Cavalieri's principle we have 
\[
\begin{split}
\left| \left\{ x\in B_{1}(0) \colon  (A_m x)_n \geq
 \frac{\lambda_{\max}}{2} \right\} \right|
&=\int_{\frac{\lambda_{\max}}{2}}^{\lambda_{\max}} |E_{1}(A_m, 0)\cap\{y:y_n=x_n\}|\,dx_n\\
&=|B_1^{(n-1)}(0)|\cdot\int_{\frac{\lambda_{\max}}{2}}^{\lambda_{\max}} \lambda_1\lambda_2\dots\lambda_{n-1} \left( 1-(x_n/\lambda_{\max})^2\right)^{\frac{n-1}{2}}\,dx_n\\
&=|B_1^{(n-1)}(0)|\cdot\int_{\frac{\lambda_{\max}}{2}}^{\lambda_{\max}} \frac{1}{\lambda_{\max}} \left( 1-(x_n/\lambda_{\max})^2\right)^{\frac{n-1}{2}}\,dx_n\\
&=|B_1^{(n-1)}(0)|\cdot\int_{\frac{1}{2}}^{1}\left( 1-y_n^2\right)^{\frac{n-1}{2}}\,dy_n\\
&=| B_1(0)\cap\{y: y_n\geq 1/2\}|,
\end{split}
\]
where we have used Remark \ref{properties.elipsoids} and the change of variable $x_n=\lambda_{\max} y_n$.
Therefore, we obtain
$$
\dashint_{B_{\varepsilon_m}(0)}
u(x_0+A_m y) \,dy \geq c(\delta) >0. $$
Using  this lower bound, \eqref{absolute.goal.C88.99} and Theorem \ref{theorem.C2.case} we conclude

\[
\begin{split}
o(\varepsilon_m^2)&=
\mathop{\mathop{\inf}_{\det A=1}}_
{A\leq \phi(\varepsilon_m)I}
\bigg\{
\dashint_{B_{\varepsilon_m}(0)}
\left(
u(x_0+Ay)
-
\frac{n}{2(n+2)}\,
\left(\det{D^2u(x_0)}\right)^{1/n}
\varepsilon_m^2
\right)
\,dy
\bigg\}
\\
&\geq\mathop{\mathop{\inf}_{\det A=1}}_
{E_{\varepsilon_m} (A,x_0) \subset\Omega}\bigg\{
\dashint_{B_{\varepsilon_m }(0)}
\left(
u(x_0+Ay)  -
\frac{n}{2(n+2)}\,
\left(\det{D^2u(x_0)}\right)^{1/n}
\varepsilon_m^2
\right)
\,dy
\bigg\}
\\
&\geq
\dashint_{B_{\varepsilon_m}(0)}
\left(
u(x_0+A_my)  -
\frac{n}{2(n+2)}\,
\left(\det{D^2u(x_0)}\right)^{1/n}
\varepsilon_m^2
\right)
\,dy
+ o(\varepsilon_m^2 )
\\
&\geq
c (\delta) 
+ o(\varepsilon_m^2 )
>0
\end{split}
\]
as $\varepsilon_m\to0$, a contradiction. This shows that a sequence $A_m$ verifying $\det A_m=1$, $E_{\varepsilon_m}(A_m, x_0)\subset \Omega$,  and 
$
E_{\varepsilon_m}(A_m, x_0) \cap (\Omega \setminus B_{\delta}) \neq \emptyset,
$
together with inequality \eqref{absolute.goal.C88.99} cannot exist. 
\end{proof}

\begin{proof}[Proof of Theorem \ref{theorem.C2.case2}]
We fix $x_0\in \Omega$ and, as in the previous lemma, assume that $u(x_0)=0$ and $u\geq 0$ in $\Omega$.  Since $u$ is convex we can subtract  a supporting hyperplane of $u$ at $x_0$. 
We split the proof in two cases: $\det(D^2u (x_0)) =0$ and $\lambda_{\min}(D^2u (x_0)) >0$.

In the first case, since  $\det(D^2u (x_0)) =0$  and $u(x_0)=0$, we have to prove that
\[
\mathop{\mathop{\inf}_{\det A=1}}_
{E_{\varepsilon} (A,x_0) \subset\Omega}
\dashint_{B_{\varepsilon }}
u(x_0+Ay) 
\,dy=
 o(\varepsilon^2).
\]
This follows from Theorem~\ref{theorem.C2.case} since
\[
0\leq
\mathop{\mathop{\inf}_{\det A=1}}_
{E_{\varepsilon} (A,x_0) \subset\Omega}
\dashint_{B_{\varepsilon }}
u(x_0+Ay) 
\,dy
\leq
\mathop{\mathop{\inf}_{\det A=1}}_
{A\leq \phi(\varepsilon)I}
\dashint_{B_{\varepsilon }}
u(x_0+Ay) 
\,dy
= o(\varepsilon^2)
\]
as $\varepsilon\to 0$.

In the second case, when the eigenvalues of $D^2 u(x_0)$ are strictly positive, Lemma \ref{keylemma} shows that we may assume that
the ellipsoids $E_\varepsilon(A,x_0)$ are contained in a ball $B_\delta(x_0)$ with $\delta$ as small as needed. 
We then argue as in the proof of Theorem \ref{theorem.C2.case}.\end{proof}

We conclude this section with the following remark.
\begin{remark}
\label{proof.border.formula}
Using the coarea formula, we can rewrite our mean value formulas in terms of a weighted surface integral over  $\partial E_{\varepsilon}(A,x)$  as follows,
\begin{equation}\label{MVF.surface.weighted}
\begin{split}
u(x)
=
\mathop{\mathop{\inf}_{
\det A=1,}}_{
A\leq\phi(\varepsilon) I}\left\{
\frac{
\displaystyle\int_{\partial E_\varepsilon(A,x)}
u(y)\,|(AA^t)^{-1}(y-x)|^{-1}\,d\mathcal{H}^{n-1}(y)
}
{\displaystyle
\int_{\partial E_\varepsilon (A,x)}
|(AA^t)^{-1}(y-x)|^{-1}\,d\mathcal{H}^{n-1}(y)
}
\right\}
-\frac{1}{2}\,(f(x))^{1/n}\,{ \varepsilon}^2+o(\varepsilon^2).
\end{split}
\end{equation}
To see this, notice that for a paraboloid $P$, a change of variables in \eqref{proof.MVT.intermediate.step.1}  yields
\[
\dashint_{E_{\varepsilon}(A,x)}
\left(
 P(y)- P(x)
\right)
\,dy
=
\frac{\varepsilon^2}{2(n+2)}
\textnormal{trace}\big(A^tD^2 P(x)A\big).
\]
For any $g\in L^1$, the coarea formula
gives
\begin{equation}\label{lemita.coarea.remark}
\int_{E_\varepsilon(A,x)}
g(y)\,dy
=
\int_0^\varepsilon
t
\int_{\partial E_t(A,x)}
g(y)\,|(AA^t)^{-1}(y-x)|^{-1}\,d\mathcal{H}^{n-1}(y)
\,dt.
\end{equation}
In particular, for $g(y)= P(y)- P(x)$ we have
\[
\begin{split}
\int_0^\varepsilon
&t
\int_{\partial E_t(A,x)}
\left(
 P(y)- P(x)
\right)\,|(AA^t)^{-1}(y-x)|^{-1}\,d\mathcal{H}^{n-1}(y)
\,dt
\\
&=\int_{E_\varepsilon(A,x)}
\left(
 P(y)- P(x)
\right)
\,dy
=
\frac{|B_1(0)|}{2(n+2)}\,
\textnormal{trace}\big(A^tD^2 P(x)A\big)
\varepsilon^{n+2}.
\end{split}
\]
Then, we can differentiate both sides with respect to $\varepsilon$ and obtain
\begin{equation}\label{lemita.coarea.remark1}
\begin{split}
\int_{\partial E_\varepsilon(A,x)}
\left(
 P(y)- P(x)
\right)\,|(AA^t)^{-1}(y-x)|^{-1}\, d\mathcal{H}^{n-1}(y)
=\frac{|B_1(0)|}{2}\,
\textnormal{trace}\big(A^tD^2 P(x)A\big)
\varepsilon^{n}.
\end{split}
\end{equation}
Observe that making
$g(y)=1$ in
\eqref{lemita.coarea.remark} and differentiating both sides with respect to $\varepsilon$ gives
\begin{equation}\label{lemita.coarea.remark2}
n\varepsilon^{n-2}\,|B_1(0)|
=
\int_{\partial E_\varepsilon(A,x)}
|(AA^t)^{-1}(y-x)|^{-1}\,d\mathcal{H}^{n-1}(y).
\end{equation}
Then, \eqref{lemita.coarea.remark1} and \eqref{lemita.coarea.remark2}
 lead to 
\[
\frac{
\displaystyle\int_{\partial E_\varepsilon(A,x)}
 P(y)\,|(AA^t)^{-1}(y-x)|^{-1}\,d\mathcal{H}^{n-1}(y)
}
{\displaystyle
\int_{\partial E_\varepsilon(A,x)}
|(AA^t)^{-1}(y-x)|^{-1}\,d\mathcal{H}^{n-1}(y)
}
=
 P(x)
+
\frac{\varepsilon^2}{2n}\,
\textnormal{trace}\big(A^tD^2 P(x)A\big),
\] 
 and \eqref{MVF.surface.weighted} follows in the same way as in the proof of Theorems \ref{theorem.C2.case} and \ref{thm.MVP.visco}.
\end{remark}

\section{Examples}\label{sect.examples}

In this section we present  several examples referred to in the introduction that  highlight different aspects of Theorems \ref{theorem.C2.case}, \ref{thm.MVP.visco},  \ref{theorem.C2.case2}, and
\ref{thm.MVP.visco2}. 
\par
In the first two examples, we illustrate Theorem \ref{theorem.C2.case} for convex paraboloids, with emphasis in the case when $\lambda=0$ is an eigenvalue of the Hessian matrix of the paraboloid. 
As pointed out in the introduction, 
strictly convex paraboloids satisfy \eqref{absolute.goal.C2} without remainder.
However, when $\lambda = 0$ is an eigenvalue of the Hessian matrix of the paraboloid, the remainder $o(\varepsilon^2)$ appears due to the 
restriction $A\leq\phi( \varepsilon)I$. 
\begin{example}\label{equaltyforallmatrices}
For all convex paraboloids we have equality without the remainder in the expansion \eqref{absolute.goal.C2.2} considering all matrices satisfying $\det(A)=1$.
\end{example}
 By definition, a paraboloid $P$ coincides with its second-order Taylor expansion around $x$ and we can write
\[
\begin{split}
\dashint_{{B_\varepsilon(0)}}
P(x+Ay)
\,dy
&=
\dashint_{{B_\varepsilon(0)}}
\Big(
P(x)
+
\langle A^t\nabla P(x),y\rangle
+
\frac12\langle A^tD^2P(x)Ay,y\rangle 
\Big)
\,dy
\\
&=
P(x)
+
\frac12
\dashint_{{B_\varepsilon}}
\langle A^tD^2P(x)Ay,y\rangle 
\,dy,
\end{split}
\]
since the integral of the first-order term vanishes by symmetry. Then,
Lemma \ref{lemma.trace.integral} gives
\begin{equation}\label{proof.MVT.intermediate.step.1}
\dashint_{{B_\varepsilon}}
P(x+Ay)
\,dy
=P(x)
+\frac{\varepsilon^2}{2(n+2)}
\,
{\rm trace}\big(A^tD^2P(x)A\big).
\end{equation}
If the paraboloid $P$ is convex, we can take infimum over the class of matrices with determinant equal to 1 on both sides of \eqref{proof.MVT.intermediate.step.1} and apply Lemma \ref{caract.determ}, to obtain 
\begin{equation}\label{proof.MVT.step1}
\inf_{\det A=1}\dashint_{{B_\varepsilon}}
P(x+Ay)
\,dy
=P(x)
+\frac{n}{2(n+2)}
\,
\big(\det D^2P(x)\big)^{1/n}\,\varepsilon^2.
\end{equation}

\begin{example}\label{example.paraboloids.what.happens.with.zero.eigs}
For strictly convex paraboloids we have equality without remainder in the expansion \eqref{absolute.goal.C2} restricted to matrices $A$  satisfying
$A\le \phi(\varepsilon) I$ and $\det(A)=1$. The remainder is present when the paraboloid is convex but not strictly convex (and not identically zero).
\end{example}

Taking infima on both sides of \eqref{proof.MVT.intermediate.step.1} with the additional constraint  $A\leq \phi(\varepsilon)I$ as in Theorem \ref{theorem.C2.case} gives
\begin{equation}\label{example.paraboloid.zero.eig.additional.constraint}
\begin{split}
\mathop{\mathop{\inf}_{\det A=1,}}_{
A\leq \phi(\varepsilon) I} \bigg\{
\dashint_{B_{\varepsilon}}
P(x+Ay)
\,dy
\bigg\}
=
P(x)+
\frac{\varepsilon^2}{2(n+2)}
\mathop{\mathop{\inf}_{\det A=1,}}_{
A\leq \phi(\varepsilon)I} \bigg\{
{\rm trace}\big(A^tD^2P(x)A\big)
\bigg\}.
\end{split}
\end{equation}
If $D^2P(x)>0$, we can apply Lemmas \ref{lemma.uniform.ellipticity.visco} and \ref{caract.determ} and get \eqref{absolute.goal.C2} 
without a remainder when $\varepsilon$ is small enough.
Let us now consider the case  when $\lambda =0$ 
is an eigenvalue of the matrix  $D^2P(x)$ with multiplicity $n-k<n$ (assume that $P\not\equiv 0$,
otherwise everything trivializes). Suppose that we have equality without remainder  in  \eqref{absolute.goal.C2}  for small $\varepsilon$.
We would have
 \begin{equation}\label{equality}
P(x)=
\mathop{\mathop{\inf}_{\det A=1,}}_{
A\leq \phi(\varepsilon)I}\bigg\{
\dashint_{B_{\varepsilon}}
P(x+Ay)
\,dy \bigg\},
\end{equation}
which, in view of \eqref{example.paraboloid.zero.eig.additional.constraint}, implies that for $\varepsilon>0$
$$
\mathop{\mathop{\inf}_{\det A=1,}}_{
A\leq \phi(\varepsilon)I} \bigg\{
{\rm trace}\big(A^tD^2P(x)A\big)
\bigg\}=0.
$$
We show that this is not possible. Write
\[
D^2P(x)=Q
\,{\rm diag}
\big(
\lambda_1(D^2P(x)),\ldots,\lambda_k(D^2P(x)),0,\ldots,0
\big)\,
Q^t
\]
with $Q Q^t=I$. 
For $\varepsilon>0$ small consider matrices $A$ such that
$\det(A)=1$ and $A\leq \phi(\varepsilon)I$. We have
\[
{\rm trace}( A^tD^2P(x)A)\geq \frac{1}{\phi(\varepsilon)^{2(n-1)}}
\min_{j=1,..,k}\lambda_j(D^2P(x)) >0
\]
independently of $A$, no matter how small $\varepsilon$ is (this is due to the fact that $\min_{j=1,..,k} \lambda_j (D^2 P(x))>0$).

The following example shows that for a general smooth function (without 
convexity), we have
\begin{equation}\label{examples.infima.are.different}
\inf_{\det A=1}
\dashint_{B_{\varepsilon}(0)}
u(x+Ay)\,dy
\neq
\mathop{\mathop{\inf}_{\det A=1,}}_{
A\leq\phi(\varepsilon) I} \bigg\{
\dashint_{B_{\varepsilon}(0)}
u(x+Ay)\,dy
\bigg\}
+o(\varepsilon^2).
\end{equation}
The problem is the change in convexity outside of the domain, which is detected  by ``far reaching'' matrices $A$. 
This is not seen when we require $A\leq \phi(\varepsilon)$ because  we remain inside the domain for $\varepsilon$ small enough.

\begin{example} \label{example.fourth.order.negative}
Consider the functions
\[
u^+(x)=\frac{|x|^2}{2} + \frac{|x|^4}{12}\qquad
\textrm{and}\qquad u^-(x)=\frac{|x|^2}{2} - \frac{|x|^4}{12}.
\]
Then, $u^+,u^-$ respectively solve $\det{D^2u(x)}=f(x)$  in the classical sense for
\[
f^+(x)=(1+|x|^2)\Big(1+\frac{|x|^2}{3}\Big)^{n-1}
\qquad\textrm{and}\qquad
f^-(x)=(1-|x|^2)\Big(1-\frac{|x|^2}{3}\Big)^{n-1}.
\]
Moreover,  for $\phi(\varepsilon)$ under hypotheses \eqref{hipotesis.phi}, we have
\begin{equation} \label{examples.infima.are.different.999}
\inf_{\det A=1}
\dashint_{B_{\varepsilon}(0)}
u^-(x+Ay)\,dy
=-\infty, 
\end{equation}
and 
\begin{equation}\label{examples.infima.are.different.99}
\begin{split}
\mathop{\mathop{\inf}_{\det A=1}}_
{A\leq\phi(\varepsilon) I}
\dashint_{B_{\varepsilon}(0)}
u^-(x+Ay)\,dy=
\mathop{\mathop{\inf}_{\det A=1}}_
{E_\varepsilon (A,x) \subset\Omega}
\dashint_{B_{\varepsilon}(0)}
u^-(x+Ay)
\,dy +o(\varepsilon^2)
\end{split}
\end{equation}
where  
we take $\Omega=B_{1}(0)$ so that 
$u^-$ is convex and $f^-(x)\geq 0$ in $\Omega$. This is in contrast with $u^+$, for which
\begin{equation}\label{examples.infima.are.all.equal}
\begin{split}
\inf_{\det A=1}
\dashint_{B_{\varepsilon}(0)}
u^+(x+Ay)\,dy
=
\mathop{\mathop{\inf}_{\det A=1}}_
{A\leq\phi(\varepsilon) I}
\dashint_{B_{\varepsilon}(0)}
u^+(x+Ay)\,dy +o(\varepsilon^2)
\end{split}
\end{equation}
as a consequence of  Theorems \ref{theorem.C2.case2} and \ref{theorem.C2.case}.
\end{example}

Let us show first with an explicit calculation that
\[
\mathop{\mathop{\inf}_{\det A=1}}_
{A\leq  \phi(\varepsilon)I}\bigg\{
\dashint_{B_{\varepsilon}(0)}
\big(u^{-}(x+Ay)-u^{-}(x)\big)\,dy
\bigg\}
=
\frac{n}{2(n+2)}
\,(f(x))^{1/n}\varepsilon^2
+o(\varepsilon^2).
\]
To see this, first,  observe that the matrix 
\begin{equation}\label{example.fourth.order.Hessian}
D^2u^{-}(x)
=
\Big(
1-\frac{|x|^2}{3}
\Big)\,
I
-
\frac23
\big(
x \otimes x
\big)
\end{equation}
has two eigenvalues, $1-|x|^2$, which is simple and corresponds to the eigenvector $x$, and $1-|x|^2/3$, which has multiplicity $n-1$ and eigenspace  given by the subspace orthogonal to $x$. 
 Then,
\begin{equation}\label{example.fourth.order.minus.0}
 \begin{split}
\dashint_{B_{\varepsilon}(0)}
\big(u^{-}(x+Ay)-u^{-}(x)\big)\,dy
=
\dashint_{B_{\varepsilon}(0)}
\left(
\frac{|x+Ay|^2}{2}-\frac{|x+Ay|^4}{12}
-
\frac{|x|^2}{2}+\frac{|x|^4}{12}
\right)\,dy&
\\
=
\frac{1}{2}
\Big(
1-\frac{|x|^2}{3}
\Big)
\dashint_{B_{\varepsilon}(0)}
|Ay|^2
\,dy
-
\frac{1}{12}
\dashint_{B_{\varepsilon}(0)}
|Ay|^4
\,dy
-
\frac{1}{3}
\dashint_{B_{\varepsilon}(0)}
\langle x, Ay\rangle^2
\,dy&
\end{split}
\end{equation}
since $y\mapsto\langle x,Ay\rangle$ is  odd and the integral over $B_{\varepsilon}(0)$ vanishes. 
We compute the three integrals on the right-hand side. 
First, Lemma \ref{lemma.trace.integral} yields
 \begin{equation}\label{example.fourth.order.plus.01}
\dashint_{{B_\varepsilon(0)}}
|Ay|^2
\,dy
=
\frac{\varepsilon^2}{n+2}
\,
{\rm trace}\big(A^tA\big).
\end{equation}
For the second integral, a change to polar coordinates gives
 \begin{equation}\label{example.fourth.order.plus.02}
\begin{split}
\dashint_{B_{\varepsilon}(0)}
|Ay|^4
\,dy
=
\frac{\varepsilon^{4}}{(n+4)|B_1(0)|}
\int_{\partial B_{1}(0)}
|Ay|^4
\,d\mathcal{H}^{n-1}(y).
\end{split}
\end{equation}
Notice that the exterior unit normal to $\partial B_1(0)$ at  $y\in \partial B_1(0)$ is $y$ itself. Therefore, the divergence theorem yields
 \begin{equation}\label{example.fourth.order.plus.03}
\begin{split}
\int_{\partial B_{1}(0)}
|Ay|^4
\,d\mathcal{H}^{n-1}(y)
&=
\int_{ B_{1}(0)}
\textnormal{div}\left(
|Ay|^2A^tAy
\right)
\,dy
\\
&=
2\int_{ B_{1}(0)}
|A^tAy|^2
\,dy
+
\textnormal{trace}(A^tA)
\int_{ B_{1}(0)}
|Ay|^2
\,dy
\\
&=
\frac{|B_1(0)|}{n+2}
\left(
2\,\textnormal{trace}(A^tAA^tA)
+
\left(\textnormal{trace}(A^tA)\right)^2
\right),
\end{split}
\end{equation}
where we have applied  Lemma \ref{lemma.trace.integral} twice in the last step.
Finally, a similar argument, changing to polar coordinates and applying the divergence theorem yields
 \begin{equation}\label{example.fourth.order.plus.04}
\begin{split}
\dashint_{B_{\varepsilon}(0)}
\langle x, Ay\rangle^2
\,dy&
=
\frac{\varepsilon^2}{(n+2)|B_1(0)|}
\int_{\partial B_{1}(0)}
\langle x, Ay\rangle^2
\,d\mathcal{H}^{n-1}(y)
\\
&=
\frac{\varepsilon^2}{(n+2)|B_1(0)|}
\int_{ B_{1}(0)}
\textnormal{div}_y\big(
\langle A^tx, y\rangle 
A^tx
\big)
\,dy
=
\frac{\varepsilon^2}{n+2}
|A^tx|^2.
\end{split}
\end{equation}
Plugging \eqref{example.fourth.order.plus.01}--\eqref{example.fourth.order.plus.04}  back into \eqref{example.fourth.order.minus.0} gives
\begin{equation}\label{example.fourth.order.identity}
 \begin{split}
 &\frac{2(n+2)}{\varepsilon^2}
\dashint_{B_{\varepsilon}(0)}
\big(u^{-}(x+Ay)-u^{-}(x)\big)\,dy
\\
&\quad=
\Big(
1-\frac{|x|^2}{3}
\Big)
\,
{\rm trace}\big(A^tA\big)
-\frac{\varepsilon^{2}}{6(n+4)}
\left(
2\,\textnormal{trace}(A^tAA^tA)
+
\left(\textnormal{trace}(A^tA)\right)^2
\right)
-
\frac{2|A^tx|^2}{3}
\\
&\quad=
{\rm trace}\big(A^tD^2u^{-}(x)A\big)
-\frac{\varepsilon^{2}}{6(n+4)}
\left(
2\,\textnormal{trace}(A^tAA^tA)
+
\left(\textnormal{trace}(A^tA)\right)^2
\right),
\end{split}
\end{equation}
with $D^2u^{-}(x)$ given by \eqref{example.fourth.order.Hessian}.

Let us see what happens if we impose $A\leq \phi(\varepsilon)I$ with $\phi(\varepsilon)$ under hypotheses \eqref{hipotesis.phi}. We get
\[
\begin{split}
{\rm trace}&\left(A^t\left(D^2u^{-}(x)
-\frac{n+2}{6(n+4)}
\varepsilon^{2}\phi(\varepsilon)^2 I
\right)A\right)
\\
&\leq
 \frac{2(n+2)}{\varepsilon^2}
\dashint_{B_{\varepsilon}(0)}
\big(u^{-}(x+Ay)-u^{-}(x)\big)\,dy
\leq
{\rm trace}\big(A^tD^2u^{-}(x)A\big)
\end{split}
\]
and, from there, by \eqref{hipotesis.phi}, we get
\[
\begin{split}
\frac{\varepsilon^2}{2(n+2)}
&\mathop{\mathop{\inf}_{\det A=1}}_
{A\leq  \phi(\varepsilon)I}
 \Big\{
{\rm trace}\Big(A^t\big(D^2u^{-}(x)
-o(1)I
\big)A\Big)
\Big\}
\\
&\leq
\mathop{\mathop{\inf}_{\det A=1}}_
{A\leq  \phi(\varepsilon)I} \bigg\{
\dashint_{B_{\varepsilon}(0)}
\big(u^{-}(x+Ay)-u^{-}(x)\big)\,dy
\bigg\}
\leq
\frac{\varepsilon^2}{2(n+2)}
\mathop{\mathop{\inf}_{\det A=1}}_
{A\leq  \phi(\varepsilon)I}\Big\{
{\rm trace}\big(A^tD^2u^{-}(x)A\big)
\Big\}
\end{split}
\]
as $\varepsilon\to0$.
At this point, we can apply Lemmas \ref{lemma.uniform.ellipticity.visco} and \ref{caract.determ} (notice that $D^2u^{-}(x)>0$ in $\Omega$) and get that, for $\varepsilon$ small enough
\[
\begin{split}
\frac{n}{2(n+2)}
&
\Big(\det\big(D^2u^{-}(x)
-o(1)I
\big)\Big)^{1/n}
\varepsilon^2
\\
&\leq
\mathop{\mathop{\inf}_{\det A=1}}_
{A\leq  \phi(\varepsilon)I}
 \bigg\{
\dashint_{B_{\varepsilon}(0)}
\big(u^{-}(x+Ay)-u^{-}(x)\big)\,dy
\bigg\}
\leq
\frac{n}{2(n+2)}
\big(\det{D^2u^{-}(x)}\big)^{1/n} \varepsilon^2.
\end{split}
\]
We conclude using
\eqref{C2.case.determinant.estimates} to get
\[
\begin{split}
\frac{n}{2(n+2)}
\,(f(x))^{1/n}\varepsilon^2
-o(\varepsilon^2)
\leq
\mathop{\mathop{\inf}_{\det A=1}}_
{A\leq  \phi(\varepsilon)I}\bigg\{
\dashint_{B_{\varepsilon}(0)}
\big(u^{-}(x+Ay)-u^{-}(x)\big)\,dy
\bigg\}
\leq
\frac{n}{2(n+2)}
\,(f(x))^{1/n}\varepsilon^2.
\end{split}
\]

Furthermore, we know by Theorem \ref{theorem.C2.case2} that 
\[
\mathop{\mathop{\inf}_{\det A=1}}_
{A\leq\phi(\varepsilon)I }
\dashint_{B_{\varepsilon}(0)}
u^{-}(x+Ay)\,dy
=\mathop{\mathop{\inf}_{\det A=1}}_
{E_\varepsilon (A,x) \subset\Omega}
\dashint_{B_{\varepsilon}(0)}
u^{-}(x+Ay)
\,dy +o(\varepsilon^2), 
\]
although this fact is not obvious from \eqref{example.fourth.order.identity}.
 
 A similar computation to the above
shows that the function
 $$
 u^+(x)= \frac{|x|^2}{2}+ \frac{ |x|^4}{12}
 $$ 
satisfies
\[
 \begin{split}
 n\big(\det{D^2u^{+}(x)}\big)^{1/n}
 &=
 \inf_{\det{A}=1}
 {\rm trace}\big(A^tD^2u^{+}(x)A\big)
 \\
 &\leq
 \frac{2(n+2)}{\varepsilon^2}
  \inf_{\det{A}=1}
\dashint_{B_{\varepsilon}(0)}
\big(u^{+}(x+Ay)-u^{+}(x)\big)\,dy
\\
&\leq
{\rm trace}\big(A^tD^2u^{+}(x)A\big)
+\frac{\varepsilon^{2}}{6(n+4)}
\left(
2\,\textnormal{trace}(A^tAA^tA)
+
\left(\textnormal{trace}(A^tA)\right)^2
\right),
\end{split}
\]
for every $A$ with $\det{A}=1$. In particular, we can take $A=\det(D^2u^{+}(x))^{1/(2n)}D^2u^{+}(x)^{-1/2}$ and get
\[
\begin{split}
u^{+}(x)
&=
\inf_{\det A=1}
\dashint_{B_{\varepsilon}(0)}
u^{+}(x+Ay)\,dy
-\frac{n}{2(n+2)}\,(f(x))^{1/n}\,\varepsilon^2
+O(\varepsilon^4)
\end{split}
\]
as $\varepsilon \to0$, from which \eqref{examples.infima.are.all.equal} follows

To conclude the example let us see that
\begin{equation}\label{example.negative.inf.unbounded}
\inf_{\det A=1}
\dashint_{B_{\varepsilon}(0)}
\big(u^{-}(x+Ay)-u^{-}(x)\big)\,dy
=
-\infty,
\end{equation}
and  \eqref{examples.infima.are.different.999} holds. This shows that the ellipsoids in the mean value property need to be restricted to the domain where $u$ is convex.
To prove \eqref{example.negative.inf.unbounded}, notice that  \eqref{example.fourth.order.identity}, yields
\[
 \begin{split}
 \frac{2(n+2)}{\varepsilon^2}&
\inf_{\det A=1}\dashint_{B_{\varepsilon}(0)}
\big(u^{-}(x+Ay)-u^{-}(x)\big)\,dy
\\
&=
\inf_{\det A=1}\left\{
{\rm trace}\big(A^tD^2u^{-}(x)A\big)
-\frac{\varepsilon^{2}}{6(n+4)}
\left(
2\,\textnormal{trace}(A^tAA^tA)
+
\left(\textnormal{trace}(A^tA)\right)^2
\right)
\right\}
\end{split}
\]
with $D^2u^{-}(x)$ given by \eqref{example.fourth.order.Hessian}. Let us assume that the matrix $A$ is diagonal. Then,
\[
 \begin{split}
{\rm trace}\big(A^tD^2u^{-}(x)A\big)
&-\frac{\varepsilon^{2}}{6(n+4)}
\left(
2\,\textnormal{trace}(A^tAA^tA)
+
\left(\textnormal{trace}(A^tA)\right)^2
\right)
\\
&=
\sum_{i=1}^{n}
\lambda_i^2(A)
\,u_{x_ix_i}(x)
-
\frac{\varepsilon^{2}}{6(n+4)}
\left(
2\sum_{i=1}^{n}
\lambda_i^4(A)
+
\left(
\sum_{i=1}^{n}
\lambda_i^2(A)
\right)^2
\right)
\end{split}
\]
and the goal is to minimize this expression with the restriction $\det(A)=1$.
For simplicity, let us further assume $n=2$ here, since the general case is similar. 
In two dimensions, $\det(A)=1$ means that the two eigenvalues of $A$ are necessarily reciprocal 
and we can write $\lambda_1(A)=(\lambda_2(A))^{-1}=\lambda$. Then,  we have
\[
 \begin{split}
{\rm trace}\big(A^tD^2u^{-}(x)A\big)
-\frac{\varepsilon^{2}}{36}
\left(
2\,\textnormal{trace}(A^tAA^tA)
+
\left(\textnormal{trace}(A^tA)\right)^2
\right)&
\\
=
\lambda^2
\,u_{x_1x_1}(x)
+
\frac{u_{x_2x_2}(x)}{\lambda^2}
-
\frac{\varepsilon^{2}}{12}
\left(
\lambda^4
+\frac{1}{\lambda^4}
\right)
-
\frac{\varepsilon^{2}}{18}.&
\end{split}
\]
Since we are assuming no relation between $A$ and $\varepsilon$, we can fix $\varepsilon$ and let $\lambda\to0$, 
which gives~\eqref{example.negative.inf.unbounded}.

 In the following example we present a viscosity solution to the 
 Monge-Amp\`ere equation that is not classical. Nevertheless, the asymptotic mean value formula holds in the point-wise sense.

\begin{example} \label{example4.4} 
Consider the Monge-Amp\`ere equation $\det{D^2u(x)}=f(x)$ with 
\[ 
u(x) = \frac{1}{2}(|x|-1)_+^2, 
\qquad
f(x) = \left(1-\frac{1}{|x|}\right)_+^{n-1}.
\]

This is an example of a viscosity solution
of the Monge-Amp\`ere equation with right-hand side $f(x)$ 
 that is not  a classical $C^2$ solution.    Notice that $f=0$ in 
$\overline{B}_1(0)$.
We show
 how much simpler it is to work with the asymptotic mean value property in the viscosity sense, in contrast with 
 in the point-wise sense, where we have to compute an infimum of integrals for the actual function $u$ (and not just for the test
 paraboloid). In this example, the resulting integrals for $u$ are of elliptic type. Nevertheless, we are able to use the viscosity 
 computation to show that the asymptotic mean value formula holds in the point-wise sense.  
 \end{example}

For the reader's convenience, we will  verify the definition of viscosity solution. This only needs to be done in the sphere $\partial B_1(0)$ since $u$ is locally $C^2$ otherwise. The subsolution case is immediate since any convex paraboloid $P$ that touches $u$ from above at $x_0$  will automatically satisfy $\det D^2P(x_0) \geq 0=f(x_0)$ by convexity. 
Let us check now the supersolution case. Let $P$ be a  convex paraboloid that  touches $u$ from below at $x_0\in\partial B_1(0)$, i.e.,
 such that $P \leq u$ with $P(x_0) = u(x_0) = 0$.  
Since $\nabla u(x_0) = 0$, we must require $\nabla P(x_0) = 0$ as well.  Moreover, $u$ is constant and equal to zero in part of any neighborhood of $x_0$ and, by convexity and the contact condition, we find that $P(x)=0$ is the only convex paraboloid that touches $u$ from below at $x_0$ and $\det D^2 P(x_0) = 0$, as required by the definition of  viscosity supersolution.

Assume now that  $P(x)$ is a convex paraboloid that touches $u$ from above at $x_0$, with $|x_0|=1$.
Because $u(x_0)=0$ and $\nabla u(x_0)=0$,
then, $P(x)$ must be of the form
\begin{equation} \label{example.visc.defn.para}
P(x)
=
\frac12
\big\langle
M(x-x_0),(x-x_0)
\big\rangle
\end{equation}
Moreover, in a neighborhood of $x_0$ we must have
\begin{equation} \label{example.visc.ineq.we.want}
 \frac{1}{2}(|x|-1)_+^2
= u(x)
 \leq
 P(x)=
\frac12
\big\langle
M(x-x_0),(x-x_0)
\big\rangle,
\end{equation}
which holds trivially for $|x|\leq1$.
Taking $x=(1+a)x_0$ in the previous equation we obtain 
that
\begin{equation}\label{example.visc.necessary.condition}
\langle
Mx_0,x_0
\rangle
\geq1
\end{equation}
is a  necessary condition for contact. For example, if $M\geq I$, then 
\[
2P(x)=
\langle
M(x-x_0), (x-x_0)
\rangle
\geq
|x-x_0|^2
=
|x|^2
-
2
\langle
x,x_0
\rangle
+1
\geq
|x|^2
-
2
|x|
+1
=
2u(x)
\]
 and this holds for all $x$.
However, \eqref{example.visc.necessary.condition} is not a sufficient condition. As we are going to see, one can take some of the eigenvalues of $M$ smaller than 1 in the directions orthogonal to $x_0$ and still have that $P$ touches $u$ from above in a small neighborhood around $x_0$. 

Let us choose in \eqref{example.visc.defn.para} the matrix
\begin{equation}\label{example.visc.defn.para.M}
M
=
Q\,\textnormal{diag}(\lambda_1,\ldots,\lambda_n)Q^t,
\end{equation}
with $Q$ an orthogonal matrix with first column $x_0$ and the remaining $n-1$ forming an orthonormal basis of the orthogonal complement to $x_0$. Furthermore, we will choose
\[
\lambda_1=1\qquad\textrm{and}\qquad\lambda_i=\lambda>0, \quad  i\neq1.
\]
Then, we are going to show that $P$ touches $u$ from above at $x_0$ in the ball $B_\delta (x_0)$ with
\begin{equation}
 \label{example.visc.defn.para.epsilon}
\delta = \frac{\lambda}{1-\lambda}.
\end{equation}
Notice that $\delta \to\infty$ as $\lambda\to1$ (which means that the neighborhood becomes larger and larger as we approach $M=I$)
 and $\delta \to0$ as $\lambda\to0$ (the neighborhood degenerates as the paraboloids become flatter in the directions orthogonal to $x_0$).

Denote by $w_i$ the columns of $Q$. Because they form an orthonormal basis of $\mathbb{R}^n$, we can write
\[
x
=
x_0
+
\sum_{i=1}^{n}
c_iw_i
=
(1+c_1)x_0
+
\sum_{i=2}^{n}
c_iw_i
\]
and
\begin{equation} \label{example.visc.pitagoras}
 |x|^2=(1+c_1)^2+\sum_{i=2}^{n}c_i^2.
\end{equation}
Therefore,
\[
\big\langle
M(x-x_0),(x-x_0)
\big\rangle
=
c_1^2+\lambda\sum_{i=2}^{n}c_i^2
=
c_1^2
-\lambda(1+c_1)^2
+
\lambda|x|^2,
\]
and proving \eqref{example.visc.ineq.we.want} for $|x|>1$ becomes equivalent to showing
\begin{equation} \label{example.visc.ineq.we.want2}
(|x|-1)^2
-
\lambda|x|^2
\leq
c_1^2
-\lambda(1+c_1)^2.
\end{equation}
Observe that when $\lambda\leq1$ the function $g(t)=t^2-\lambda(t+1)^2$ is decreasing for $t\leq\lambda/(1-\lambda)$. Moreover, \eqref{example.visc.pitagoras} and \eqref{example.visc.defn.para.epsilon} imply that 
\[
c_1
\leq
|x|-1
\leq
|x-x_0|+|x_0|-1\leq\frac{\lambda}{1-\lambda}
\]
and therefore
$g(|x|-1)\leq g(c_1)$,
which is equivalent to \eqref{example.visc.ineq.we.want2} and \eqref{example.visc.ineq.we.want}.

Let us show now that, in this case,
\begin{equation}\label{example.viscos.point-wise}
\begin{split}
\inf_{\det A=1}
\dashint_{B_{\varepsilon}(0)}
u(x_0+Ay)\,dy
=
\mathop{\mathop{\inf}_{\det A=1,}}_{
A\leq \phi(\varepsilon)I} \bigg\{
\dashint_{B_{\varepsilon}(0)}
u(x_0+Ay)\,dy
\bigg\}=u(x_0)+o(\varepsilon^2)
\end{split}
\end{equation}
as $\varepsilon \to 0$, for $\phi$ satisfying \eqref{hipotesis.phi}.
First, notice that for every $P$ that touches $u$ from above at $x_0$ in a neighborhood, which must be of the from \eqref{example.visc.defn.para}, we have
\begin{equation}\label{example.visc.chain.point-wise}
\begin{split}
0&\leq\inf_{\det A=1}
\dashint_{B_{\varepsilon}(0)}
u(x_0+Ay)\,dy
\leq
\mathop{\mathop{\inf}_{\det A=1,}}_{
A\leq \phi(\varepsilon)I} \bigg\{
\dashint_{B_{\varepsilon}(0)}
u(x_0+Ay)\,dy
\bigg\}
\\
&\leq
\mathop{\mathop{\inf}_{\det A=1,}}_{
A\leq \phi(\varepsilon)I}\bigg\{
\dashint_{B_{\varepsilon}(0)}
P(x_0+Ay)\,dy
\bigg\}
=
\frac{\varepsilon^2}{2(n+2)}\,
\mathop{\mathop{\inf}_{\det A=1,}}_{
A\leq \phi(\varepsilon)I} \Big\{
{\rm trace}(A^tMA)
\Big\}
\end{split}
\end{equation}
for $\varepsilon$ small enough, where in the last equality we have applied Lemma \ref{lemma.trace.integral}.

In particular, when we choose $M$ given by
\eqref{example.visc.defn.para.M}, the paraboloid $P$ touches $u$ from above at $x_0$ in a ball $B_\delta (x_0)$ with $\delta$ given by
\eqref{example.visc.defn.para.epsilon}. Moreover, we can apply
 Lemma \ref{lemma.uniform.ellipticity.visco} as long as
 \[
\left(
\frac{1+(n-1)\lambda}{\lambda}
\right)^{1/2} 
<
\phi(\varepsilon)
\qquad\textrm{and}\qquad
 \varepsilon <\frac{\lambda}{1-\lambda}=\delta ,
 \]
which holds for $\varepsilon$ small enough.
This, together with \eqref{example.visc.chain.point-wise}, yields
\[
\begin{split}
0&\leq\inf_{\det A=1}
\dashint_{B_{\varepsilon}(0)}
u(x_0+Ay)\,dy \leq
\mathop{\mathop{\inf}_{\det A=1,}}_{
A\leq \phi(\varepsilon)I} \bigg\{
\dashint_{B_{\varepsilon}(0)}
u(x_0+Ay)\,dy
\bigg\}
\\
&\leq
\frac{n}{2(n+2)}\,
\left(\det{M}\right)^{1/n}\varepsilon^2
\leq
C\lambda^{1-\frac{1}{n}}
\varepsilon^2
\end{split}
\]
with $C$ independent of $\lambda$. Since $\lambda$ can be taken arbitrarily small, 
we conclude \eqref{example.viscos.point-wise}.

\begin{example} \label{Ex-MVP-viscosa}
This time we consider  the function $$u(x_1,x_2)=|x_1|$$ in $\mathbb{R}^2$,
which  is a viscosity solution to the Monge-Amp\`ere equation with $f=0$.
The mean value formula \eqref{MVP.solid.visco} does not hold point-wise for any $\phi$ satisfying \eqref{hipotesis.phi}.
The mean value formula \eqref{MVP.solid.visco2} holds in a point-wise sense if we consider the function in  its natural domain $\mathbb R^2$ (see Example~\ref{ejemplopablo} for a function where it does not hold point-wise).
On the other hand, if we restrict the domain to $B_1(0)$, the expansion \eqref{MVP.solid.visco2} does not hold in a point-wise sense.

More generally, we consider $$u(x_1,x_2)=|x_1|^\gamma,$$ with $\gamma \geq 1$ in $\mathbb{R}^2$.
We have that
$u(x_1, x_2)$ is a viscosity solution  for the Monge-Amp\`ere equation with 
$f=0$  and the mean value property \eqref{MVP.solid.visco}  holds point-wise  only when $\phi(\varepsilon)$ satisfies 
\begin{equation} \label{example.2D.condition.phi}
\lim_{\varepsilon\to0}
\frac{
\varepsilon^{1-\frac{2}{\gamma}}
}
{
\phi(\varepsilon)
}
=0.
\end{equation}
In particular, when $\gamma=1$ the mean value property \eqref{MVP.solid.visco} does not hold point-wise for any function $\phi(\varepsilon)$ satisfying
\eqref{hipotesis.phi}. On the other hand, \eqref{MVP.solid.visco} is satisfied point-wise for every 
$\phi(\varepsilon)$ satisfying
\eqref{hipotesis.phi} when $\gamma=2$.
\end{example}

Using polar coordinates,
\[
\int_{B_{\varepsilon}} u(A x)\,dx
=
\int_{B_{\varepsilon}} |a_{11}x_1+a_{12}x_2|^\gamma\,dx
=
\frac{\varepsilon^{2+\gamma}}{2+\gamma}\int_0^{2\pi} |a_{11}\cos{\theta}+a_{12}\sin{\theta}|^\gamma \,d\theta.
\]
Assume $a_{12}\neq 0 $ and let $\theta_0=\arctan{(a_{11}/a_{12})}$. Then,
\[
\begin{split}
\int_0^{2\pi} &|a_{11}\cos{\theta}+a_{12}\sin{\theta}|^\gamma \,d\theta
=
\left(a_{11}^2+a_{12}^2\right)^{\gamma/2}
\int_0^{2\pi} |\sin{(\theta+\theta_0)}|^\gamma \,d\theta
\\
&
=
\left(a_{11}^2+a_{12}^2\right)^{\gamma/2}
\int_{\theta_0}^{\theta_0+2\pi} |\sin{(\theta+\theta_0)}|^\gamma \,d\theta
=
2\left(a_{11}^2+a_{12}^2\right)^{\gamma/2}
\int_{0}^{\pi} \sin^\gamma{\theta} \,d\theta
\\
&=
C_\gamma\left(a_{11}^2+a_{12}^2\right)^{\gamma/2}.
\end{split}
\]
If $a_{12}= 0 $ we obtain the same result.
Therefore,
\[
\dashint_{B_{\varepsilon}(0) } u(A x)\,dx
=
C_\gamma\left(a_{11}^2+a_{12}^2\right)^{\gamma/2}\varepsilon^\gamma
=
C_\gamma\langle AA^te_1,e_1\rangle^{\gamma/2}\varepsilon^\gamma
\geq
C_\gamma\lambda_{\min}^\gamma(A)\varepsilon^\gamma,
\]
with equality for diagonal matrices where $a_{11}=\lambda_{\min}(A)$.

Observe that the restriction $A\leq \phi(\varepsilon)I$ implies $\lambda_{\min}\geq 1/\phi(\varepsilon)$.
Therefore,
\[
\mathop{\mathop{\inf}_{\det A=1}}_{A\leq \phi(\varepsilon)I } \dashint_{B_{\varepsilon}(0) } u(A x) \,dx
=
C_\gamma
\left(\frac{\varepsilon}{\phi(\varepsilon)}\right)^\gamma,
\]
which is $o(\varepsilon^2)$ only as long as
\eqref{example.2D.condition.phi} holds.
In the particular case  $\phi(\varepsilon)=\varepsilon^{-\alpha}$ 
condition \eqref{example.2D.condition.phi} reduces to $\gamma(1+\alpha)>2$. Then, for each $\alpha\in(0,1)$ there exists $\gamma>1$ such that $\gamma(1+\alpha)\leq 2$ and therefore a function $C^{1,\gamma}$ for which the mean value  property \eqref{MVP.solid.visco}  does not hold point-wise.

If no conditions over the matrix are imposed we get
\[
\inf_{\det A=1} \dashint_{B_{\varepsilon}} u(A x) \,dx =0.
\]	
The restriction $E_{\varepsilon} (A,0) \subset B_1$ implies $\lambda_{\max}\leq 1/\varepsilon$ and therefore $\lambda_{\min}\geq \varepsilon$.
We get
\[
\mathop{\mathop{\inf}_{\det A=1}}_
{E_{\varepsilon} (A,0) \subset B_1}
\dashint_{B_{\varepsilon}} u(A x) \,dx
\geq
C \varepsilon^{2\gamma}
\]
which is not $o(\varepsilon^2)$ for $\gamma=1$.

\begin{example}\label{ejemplopablo}
Here we present a function defined in $\mathbb R^2$ that is a viscosity solution to the equation but that does not verify \eqref{MVP.solid.visco2} in a point-wise sense at the origin.
\end{example}

The function 
\[
u(x_1,x_2)=
\begin{cases}
|x_1| & \text{ if }|x_1|\geq x_2^4\\
\frac{x_2^8+x_1^2}{2x_2^4} & \text{ if }|x_1|< x_2^4.
\end{cases}
\]
It is a solution to  the equation $\det{D^2u(x)}=f(x)$  in $\mathbb{R}^2$ with 
\[
f(x_1,x_2)=
\begin{cases}
0 & \text{ if }|x_1|\geq x_2^4\\
6\frac{x_2^8 - x_1^2}{x_2^{10}} & \text{ if }|x_1|<x_2^4.
\end{cases}
\]
This can be easily verified in the points where the function is smooth. 
Since $u(0,x_2)=\frac{x_2^4}{2}$ and therefore $\frac{\partial^2 u}{\partial^2 x_2}(0,0)=0$ we conclude that the equation holds at the origin.
For the other points of the set $\{|x_1|= x_2^4\}$, we have that $\frac{\partial u}{\partial x_1}(x_1,x_2)=1$ and in the side where $u(x_1,x_2)=|x_1|$ we have $\frac{\partial^2 u}{\partial^2 x_1}(x_1,x_2)=0$.
Then, any convex paraboloid $P$ touching $u$ from below at those points must verify $\frac{\partial^2 P}{\partial^2 x_1}(x_1,x_2)=0$.

Observe that $u(x_1,x_2)\geq |x_1|$ and $u(x_1,x_2)\geq x_2^4/2$, then, we have that
\[
\inf_{\det A=1} \dashint_{B_{\varepsilon} } u(A x) \,d x
\]
is not of order $o(\varepsilon^2)$.

\section{A discrete mean value formula}
\label{sect.another.MVF}

Now our aim is to briefly sketch the proof of the discrete mean value characterization of viscosity solution stated in 
Theorem \ref{eq.mena.value.charact}. To this end we first need to introduce what we understand
for an asymptotic mean value property to hold in the viscosity sense in the discrete case (this definition is similar
to Definition \ref{def.asymp.mean}).

\begin{definition} \label{def.asymp.mean.discrete} A function $u$ verifies the mean value formula
$$
\displaystyle u (x) = \inf_{V\in\mathbb{O}}
\ \inf_{\alpha_{i}\in I_\varepsilon^n} \left\{ \frac{1}{n}
 \sum_{i=1}^n \frac12 u (x + \varepsilon \sqrt{\alpha_i} v_i) + \frac12 u (x - \varepsilon \sqrt{\alpha_i} v_i) 
\right\}  -\frac{\varepsilon^2}{2} (f(x))^{1/n} + o(\varepsilon^2), 
$$
as $\varepsilon \to 0$,
in the viscosity sense if whenever a convex paraboloid $P$ touches $u$ from above (respectively from below) at $x$, it holds that
$$
\displaystyle P (x) \leq (\geq) \inf_{V\in\mathbb{O}}
\ \inf_{\alpha_{i}\in I_\varepsilon^n} \left\{ \frac{1}{n}
 \sum_{i=1}^n \frac12 P (x + \varepsilon \sqrt{\alpha_i} v_i) + \frac12 P (x - \varepsilon \sqrt{\alpha_i} v_i) 
\right\}  -\frac{\varepsilon^2}{2} (f(x))^{1/n} + o(\varepsilon^2), 
$$
\end{definition}

\begin{proof}[Proof of Theorem \ref{eq.mena.value.charact}] Assume that we have a continuous function 
that verifies the asymptotic mean value property. Our goal is to show that it is a viscosity solution. 
To this end let $P$ be a paraboloid that touches $u$ strictly from below at $x$ and with eigenvalues of the Hessian
verifying $\lambda_{i} (D^2 P (x)) > 0$. Then, from the asymptotic mean value property we have
$$
 P (x) \geq \inf_{V\in\mathbb{O}}
\inf_{\alpha_{i}\in I_\varepsilon^n} \left\{ \frac{1}{n}
 \sum_{i=1}^n \frac12 P (x + \varepsilon \sqrt{\alpha_i} v_i) + \frac12 P (x - \varepsilon \sqrt{\alpha_i} v_i) 
\right\}  -\frac{\varepsilon^2}{2} (f(x))^{1/n} + o(\varepsilon^2).
$$
Next, using Taylor expansions as we did in the proof of Theorem \ref{thm.MVP.visco} we get
$$
\displaystyle 0\geq  \inf_{V\in\mathbb{O}}
\inf_{\alpha_{i}\in I_\varepsilon^n}
\left\{\frac{1}{n}\sum_{i=1}^n
 \alpha_{i}  \langle D^2 P (x)v_i, v_i \rangle \right\}
 - (f(x))^{1/n} + o(1).
$$
Passing  to the limit as $\varepsilon \to 0$  we get
$$
\displaystyle 0\geq  \inf_{V\in\mathbb{O}}
\inf_{ \prod_i \alpha_i=1}
\Bigg\{
\frac{1}{n}\sum_{j=1}^n
  \alpha_{i}  \langle D^2 P (x)v_i, v_i \rangle \Bigg\}
 -  (f(x))^{1/n} ,
$$
Since all the $\alpha_{i}$ are positive we conclude 
$$
\det (D^2 P (x)) \leq f (x).
$$

The proof that $u$ is a viscosity subsolution is analogous.

For the converse, assume that $u$ is a viscosity solution and let $ P $ a paraboloid that touches $u$ strictly from below at the point $x$, and with eigenvalues of the Hessian
verifying $\lambda_{i} (D^2 P (x)) > 0$. Since $u$ is  a viscosity solution to $\det (D^2u) =f$, we have that 
$$
\prod_{j=1}^n \lambda_{i} (D^2 P (x)) \leq f (x) .
$$

Using that all the $\alpha_{i}$ are positive we get 
$$
\displaystyle 0\geq  \inf_{V\in\mathbb{O}}
\inf_{ \prod_i \alpha_i=1}\Bigg\{
\frac{1}{n}\sum_{j=1}^n
  \alpha_{i}  \langle D^2 P (x)v_i, v_i \rangle 
  \Bigg\}
 -  (f(x))^{1/n} ,
$$
and hence,  we have 
$$
\displaystyle 0\geq  \lim_{\varepsilon \to 0} 
\inf_{V\in\mathbb{O}}
\inf_{\alpha_{i}\in I_\varepsilon^n}
\Bigg\{
\frac{1}{n}\sum_{i=1}^n
 \alpha_{i}  \langle D^2 P (x)v_i, v_i \rangle 
 \Bigg\} - (f(x))^{1/n} .
$$
That is,
$$
0 \geq  \lim_{\varepsilon \to 0} 
\inf_{V\in\mathbb{O}}
\inf_{\alpha_{i}\in I_\varepsilon^n} \left\{ \frac{1}{n}
 \sum_{i=1}^n 
 \frac{ \displaystyle \frac12 P (x + \varepsilon \sqrt{\alpha_i} v_i) + \frac12 P (x - \varepsilon \sqrt{\alpha_i} v_i) 
 -  P (x)}{\varepsilon^2}
\right\}  -  \frac{1}{2}(f(x))^{1/n} ,
$$
as we wanted to show.

The proof of the other inequality is similar.
\end{proof}
Now, let us go back to Example \ref{example4.4}  to see how this mean value property works using the function
$\phi(\varepsilon)= \frac{1}{\sqrt{\varepsilon}}$. 
We consider the Monge-Amp\`ere equation $\det{D^2u(x)}=f(x)$ in dimension 2 with 
\[ 
u(x) = \frac{1}{2}(|x|-1)_+^2, 
\qquad \mbox{and} \qquad
f(x) = \left(1-\frac{1}{|x|}\right)_+.
\]
As we have mentioned, $u$  is a viscosity solution
of the Monge-Amp\`ere equation with right-hand side $f$, but it is not a classical $C^2$ solution, see Example \ref{example4.4}.

Let us look at the point $x_0=(1,0)$ that is precisely in the set where $u$ is not smooth. 
The mean value property reads as
$$
\displaystyle u (x_0) = 
\inf_{V\in\mathbb{O}}
\inf_{
\alpha_{i}\in I_\varepsilon^2} \left\{ \frac{1}{2}
 \sum_{i=1}^2 \frac12 u (x_0 + \varepsilon \sqrt{\alpha_i} v_i) + \frac12 u (x_0 - \varepsilon \sqrt{\alpha_i} v_i) 
\right\}  -\frac{\varepsilon^2}{2} (f(x_0))^{1/2} + o(\varepsilon^2), 
$$
that is, we would like to see if
$$
\displaystyle 0 = \inf_{V\in\mathbb{O}}
\inf_{
\alpha_{i}\in I_\varepsilon^2} \left\{ \sum_{i=1}^2
  \frac14 (|(1,0) + \varepsilon \sqrt{\alpha_i} v_i)|-1)_+^2 + \frac14  (|(1,0) - \varepsilon \sqrt{\alpha_i} v_i)|-1)_+^2  
\right\}  + o(\varepsilon^2).
$$

As the involved function is non-negative we trivially have
$$
\displaystyle 0 \leq 
\inf_{V\in\mathbb{O}}
\inf_{
\alpha_{i}\in I_\varepsilon^2} \left\{ 
 \sum_{i=1}^2 \frac14 (|(1,0) + \varepsilon \sqrt{\alpha_i} v_i)|-1)_+^2 + \frac14  (|(1,0) - \varepsilon \sqrt{\alpha_i} v_i)|-1)_+^2  
\right\}  + o(\varepsilon^2).
$$

Let us show that 
$$
\displaystyle 0 = 
\frac{1}{\varepsilon^2} 
\inf_{V\in\mathbb{O}} \inf_{
\alpha_{i}\in I_\varepsilon^2} \left\{ 
 \sum_{i=1}^2 \frac14 (|(1,0) + \varepsilon \sqrt{\alpha_i} v_i)|-1)_+^2 + \frac14  (|(1,0) - \varepsilon \sqrt{\alpha_i} v_i)|-1)_+^2  
\right\} , 
$$
To this end, we choose $v_1=(1,0)$ and $v_2 = (0,1)$ and $\alpha_1 = \sqrt{\varepsilon}$ and $\alpha_2 = 1/\sqrt{\varepsilon}$. We obtain
$$
\begin{array}{rl}
\displaystyle 0 & \displaystyle \leq 
\frac{1}{\varepsilon^2} 
\inf_{V\in\mathbb{O}}
\inf_{
\alpha_{i}\in I_\varepsilon^2} \left\{ 
 \sum_{i=1}^2 \frac14 (|(1,0) + \varepsilon \sqrt{\alpha_i} v_i)|-1)_+^2 + \frac14  (|(1,0) - \varepsilon \sqrt{\alpha_i} v_i)|-1)_+^2  
\right\} \\[10pt]
&\leq  \displaystyle  \frac{1}{\varepsilon^2} \left\{ (|(1+ \varepsilon^{5/4},0))|-1)_+^2 +  (|(1, -\varepsilon^{3/4})|-1)_+^2 \right\} \\[10pt]
& =  \displaystyle  \frac{1}{\varepsilon^2} \left\{  \varepsilon^{5/2} +  (\sqrt{1 + \varepsilon^{3/2}}-1)^2 \right\} \\[10pt]
& =  \displaystyle   \varepsilon^{1/2} + \frac{ (\sqrt{1 + \varepsilon^{3/2}}-1)^2}{\varepsilon^2} .
\end{array}
$$
Now, we just have to observe that
$$
\lim_{\varepsilon\to 0} \frac{ (\sqrt{1 + \varepsilon^{3/2}}-1)^2}{\varepsilon^2} 
=  0 
$$
to conclude that the discrete mean value property holds point-wise in this case.

\medskip


\bibliographystyle{plain}

\end{document}